\numberwithin{equation}{section}
\theoremstyle{plain}
\newtheorem{theorem}{Theorem}[section]
\newtheorem{proposition}[theorem]{Proposition}
\newtheorem{lemma}[theorem]{Lemma}
\newtheorem{corollary}[theorem]{Corollary}
\theoremstyle{definition}
\newtheorem{definition}[theorem]{Definition}
\newtheorem{remark}[theorem]{Remark}
\newtheorem{example}[theorem]{Example}
\newtheorem{examples}[theorem]{Examples}
\begin{document}

\title{Parameter-dependent stochastic optimal control in finite discrete time}

\author{Asgar Jamneshan}
\address{Department of Mathematics and Statistics, University of Konstanz}
\email{asgar.jamneshan@uni-konstanz.de}

\author{Michael Kupper}
\address{Department of Mathematics and Statistics, University of Konstanz}
\email{kupper@uni-konstanz.de}

\author{Jos\'e Miguel Zapata-Garc\'ia}
\address{Department of Mathematics, Universidad de Murcia}
\email{jmzg1@um.es}

\thanks{The authours would like to thank Ilya Molchanov for discussions on Section \ref{sec:randomset}. The first two authors gratefully acknowledge financial support from DFG-Project KU 2740/2-1. The third author  was supported by a grant associated to the project MTM2014-57838-C2-1-P (MINECO)}

\subjclass[2010]{93E20, 28B20, 03E40}

\begin{abstract}  
We prove a general existence result in stochastic optimal control in discrete time where controls take values in conditional metric spaces, and depend on the current state and the information of past decisions through the evolution of a recursively defined forward process. 
The generality of the problem lies beyond the scope of standard techniques in stochastic control theory such as random sets, normal integrands and measurable selection theory.  
The main novelty is a formalization in conditional metric space and the use of techniques in conditional analysis.  
We illustrate the existence result by several examples including wealth-dependent utility maximization under risk constraints with bounded and unbounded wealth-dependent control sets, utility maximization with a measurable dimension, and dynamic risk sharing. 
Finally, we discuss how conditional analysis relates to random set theory. 
\end{abstract}
\maketitle

\section{Introduction}
	The present work investigates parameter-dependent stochastic optimization in finite discrete time with the tools of conditional analysis. 
	In the following, we introduce the mathematical problem and sketch our solution strategy.  
	Given a forward generator $(v_t)_{t=0}^{T-1}$, consider a forward process 
	\[
	x_{t+1}=v_{t}(x_t,z_t)
	\]
	the dynamics of which depend on a parameter $x_t$ as a function of earlier decisions and an immediate decision $z_t$ chosen recursively in a state-dependent control set $\Theta_t(x_t)$ for each $t=0,\ldots,T-1$. 
	Given a backward generator $(u_t)_{t=0}^T$, the goal is to maximize 
	\begin{equation}\label{eq:global}
	u_0(x_0,\cdot,z_0)\circ\ldots\circ u_{T-1}(x_{T-1},\cdot,z_{T-1})\circ u_T (x_T) 
	\end{equation}
	over all possible forward processes initialized at $x_0$ where $\circ$ denotes composition of functions. 
	By the Bellman principle, the global stochastic optimization problem \eqref{eq:global} is solved by a backward recursion if all the local one-period problems
	\begin{align}
	\quad\quad\quad\quad y_t(x_t)&=\sup_{z_t\in\Theta_t(x_t)} u_t(x_t,y_{t+1}(v_t(x_t,z_t)),z_t), \quad t=0,\ldots,T-1, \label{prob:oneperiod}\\
	\quad\quad\quad\quad y_T(x_T)&=u_T(x_T) \nonumber
	\end{align}
	attain their maxima. 
	
	Given a filtered probability space $(\Omega,\mathcal{F},(\mathcal{F}_t)_{t=0}^T,\mathbb{P})$, we assume that the forward process $x_t$ and the control process $z_t$ assume values in $\mathcal{F}_t$-conditional metric spaces $X_t$ and $Z_t$ respectively. 	An $\mathcal{F}_t$-conditional metric space is a nonempty set $X$ endowed with a vector-valued metric $d:X\times X\to L^0_+(\Omega,\mathcal{F}_t,\mathbb{P})$ satisfying a concatenation property which encodes information at time $t$. An example is the space of strongly $\mathcal{F}_t$-measurable functions with values in a metric space with almost everywhere evaluation of the metric. Intuitively speaking, a conditional metric space is a collection of classical metric spaces $X(\omega)$, $\omega\in\Omega$, which are glued together in a measurable way. Instead of arguing in each $X(\omega)$ separately and building on measurable selection lemmas, we directly work in the conditional metric space $X$ and build instead on arguments in conditional analysis. 
	This is possible since in conditional metric spaces all basic results from real metric spaces are true in conditional form, cf.~\cite{carl2018transfer,cheridito2015conditional,drapeau2016algebra}. 
	
	Our main Theorem \ref{th:existence} shows that the global supremum in \eqref{eq:global} is attained and can be reduced  by Bellman's principle to the  
	local optimization problems \eqref{prob:oneperiod}. By backward induction, we show that the value function $y_t$ is upper semi-continuous on the conditional metric space $X_t$. For this, we assume that the control set $\Theta_t(x_t)$ is conditionally sequentially compact (for a discussion of the notion of conditional compactness, we refer to \cite[Sections 3 and 4]{cheridito2015conditional} and \cite[Sections 3.4 and 4]{drapeau2016algebra}). 
	 Then the existence of an optimizer in \eqref{prob:oneperiod} follows from a conditional version of the fact that a semi-continuous function on a compact space attains its extrema. 
	 Moreover, under a regularity condition on the control set - a conditional version of outer semi-continuity in set convergence (see e.g.~\cite[Chapter 5, Section B]{rockafellar02}) - it is shown that $y_t$ in \eqref{prob:oneperiod} is upper semi-continuous on $X_t$. The assumption
	 of conditional compactness on the control set is relaxed in Proposition \ref{sl} under stronger assumptions on the generators by modifying arguments in \cite{cheridito2016equilibrium}. In particular, we additionally require that the backward generators $u_t$ are $\mathcal{F}_t$-sensitive to large losses and increasing in the state variable. 
	 
	 In Section \ref{sec:compcond} we provide sufficient conditions for conditional compactness and conditional outer semi-continuity of the control set. We focus on conditionally finite dimensional control sets. The results are illustrated with  applications in mathematical finance. In Example \ref{ex:riskconstarints} we study  an optimal consumption problem with local risk constraints on the wealth process.  
	 Example \ref{ex:measurabledim} indicates the importance of conditional Euclidean space with measurable dimension to model control processes with state-dependent dimension (e.g.~the number of traded assets at time $t$ depends on $\mathcal{F}_t$ and past decisions).
	 As an application of Proposition \ref{sl}
	 we derive optimal portfolios w.r.t.~dynamic risk measures for which the risk aversion coefficient is influenced by the current wealth. 
	 Moreover,  a closed-form solution to a dynamic wealth-dependent risk sharing problem is obtained which extends the formula of Borch \cite{borch1962}.

Normal integrands are a widely used tool to investigate parametrized stochastic optimization, see e.g.~\cite{biaginiduality,pennanen2011convex,rockafellar1974,rockwets78} and \cite{rockafellar1976integral} for an introduction. 
In Section \ref{sec:randomset}, we establish a connection between conditional analysis and random sets, normal integrands and measurable selection theory. 
In Theorem \ref{t:selec}, we prove a one-to-one correspondence between the set of measurable selections of Effros measurable and closed-valued mappings and stable and sequentially closed sets. 
This result yields a one-to-one correspondence between normal integrands and stable and sequentially semi-continuous functions. 
This indicates that control problems formulated in the language of normal integrands and random sets can equally be formulated in the language of conditional analysis.  
For a formalization with normal integrands and random sets, measurable selection lemmas provide the main tool to secure measurability.  
The use of measurable selection arguments is enforced by a pointwise application of standard results in classical analysis, and relies on topological assumptions such as separability and standard Borel spaces.  
In this regard, conditional analysis provides a measure-theoretic alternative which does not rely on any topological assumptions, and works as soon as a formalization within its language is reached which is demonstrated in this article in discrete time stochastic control theory.   
Conditional analysis approaches measurable functions directly by providing a measurable (or conditional, or stochastic, or random) version of results in classical analysis. 
The application of conditional versions of classical theorems preserves measurability, see for example the proofs below in which a measurable  version of the Bolzano-Weierstra\ss~theorem, the maximum theorem and the Heine-Borel theorem are employed.  
This perspective is implicitly present in \cite{hansen1987role,kabanov2001teachers}, however without a systematic treatment. 
A conditional version of basic results in functional analysis were established in \cite{cheridito2015conditional,filipovic2009separation,drapeau2016algebra}, and applied to financial mathematics in \cite{backhoff2016conditional,bielecki2016dynamic,cheridito2016equilibrium,cheridito2011optimal,drapeau2016conditional,filipovic2012approaches,frittelli2011conditional,frittelli2014complete,zapata16}.   
In \cite{drapeau2016algebra}, conditional versions of classical theorems were studied systematically and related to a conditional variant of set theory.  
This naturally raises connections with mathematical logic, see \cite{bell2005set,carl2018transfer,maclane2012sheaves} for related literature. 
Conditional analysis in $L^0$-modules is moreover investigated in e.g.~\cite{CerreiaVioglio2017,jamneshan2017compact,orihuela2017stability}.  
Related results in randomly normed modules are studied in e.g.~\cite{guo2010relations,guoshi11,haydon1991randomly}. \\

The remainder of this article is organized as follows. In Section \ref{sec1} we introduce the notion of conditional metric spaces and prove the main existence result. In Section \ref{sec:compcond} and Section \ref{sec:unbounded} we discuss extensions of the main result and provide several examples. The link between conditional analysis and random set theory is established in Section \ref{sec:randomset}.

\section{Main result}\label{sec1}
Let $(\Omega,\mathcal{F},\mathbb{P})$ be a probability space. Throughout we identify two sets in $\mathcal{F}$ whenever their symmetric difference is a null set, and identify two functions on $\Omega$ if they coincide a.s.~(almost surely). 
Let $\mathcal{G}$ be a sub-$\sigma$-algebra of $\mathcal{F}$.  
Denote by $\Pi_\mathcal{G}$ the set of partitions $(A_k)$ of $\Omega$ where $A_k\in \mathcal{G}$ for all $k$.  
Let $L^0_\mathcal{G}$, $L^0_\mathcal{G}(\mathbb{N})$, $L^0_{\mathcal{G},+}$, $L^0_{\mathcal{G},++}$, $\underbar{L}^0_\mathcal{G}$, and $\bar{L}^0_\mathcal{G}$ denote the spaces of $\mathcal{G}$-measurable random variables with values in $\mathbb{R}$, $\mathbb{N}$, $[0,\infty)$, $(0,\infty)$, $\mathbb{R}\cup\{-\infty\}$ and $\mathbb{R}\cup\{\pm\infty\}$ respectively. 
Recall that $L^0_\mathcal{G}$  with the pointwise a.s.~order is a Dedekind complete lattice-ordered ring. The essential supremum and the essential infimum are denoted by $\sup$ and $\inf$ respectively. 
Inequalities between random variables with values in an ordered set are always understood in the pointwise a.s.~sense. 

\begin{definition}\label{def:condMetric}
	A $\mathcal{G}$-\emph{conditional metric} on a non-empty set $X$ is a function $d:X\times X\rightarrow L^0_{\mathcal{G},+}$ such that the following conditions hold: 
	\begin{itemize}
		\item[(i)]  $d(x,y)=0$ if and only if $x=y$,
		\item[(ii)]  $d(x,y)=d(y,x),$ 
		\item[(iii)] $d(x,z)\leq d(x,y)+d(y,z)$, 
		\item[(iv)] for every sequence $(x_k)$ in $X$ and $(A_k)\in \Pi_\mathcal{G}$ there exists exactly one element $x\in X$ such that $1_{A_k}d(x,x_k)=0$ for all $k\in\mathbb{N}$.
	\end{itemize}
	The pair $(X,d)$ is called a $\mathcal{G}$-\emph{conditional metric space}.
\end{definition}
In the following we call the unique element in (iv) the \emph{concatenation} of the sequence $(x_k)$ along the partition $(A_k)$ and denote it by $\sum_k 1_{A_k}x_k$. 
For a sequence $(x_n)$ in a conditional metric space $(X,d)$ we write $x_n\to x$ a.s. whenever $d(x,x_n)\to 0$ a.s. A \emph{measurable subsequence} $(x_{n_k})$ of $(x_n)$ is of the form $x_{n_k}:=\sum_{j\in\mathbb{N}} 1_{\{n_k=j\}}x_j$ where $(n_k)$ is a sequence in $L^0_{\mathcal{G}}(\mathbb{N})$ such that $n_k<n_{k+1}$ for all $k\in\mathbb{N}$.  

\begin{definition}\label{def:basic}
	Let $(X,d_X)$ and $(Z,d_Z)$ be  $\mathcal{G}$-\emph{conditional metric spaces}, and $H$ and $G$ subsets of $X$ and $Z$, respectively. 
	We call $H$
	\begin{itemize}
		\item \emph{$\mathcal{G}$-stable} if $H \neq \emptyset$ and $\sum_k 1_{A_k} x_k\in H$ for all $(A_k)\in \Pi_\mathcal{G}$ and every sequence $(x_k)$ in $H$,
		\item \emph{sequentially closed} if $H$ contains every $x\in X$ such that there is a sequence $(x_k)$ in $H$ with $x_k\to x$ a.s.
	\end{itemize}
	
	A function $f\colon H\to G$ is said to be 
	\begin{itemize}
		\item \emph{$\mathcal{G}$-stable} if $f\left(\sum_k 1_{A_k} x_k\right)=\sum_k 1_{A_k} f(x_k)$ for all $(A_k)\in \Pi_\mathcal{G}$ and every sequence $(x_k)$ in $H$, where $H$ and $G$ are assumed to be $\mathcal{G}$-stable,
		\item \emph{sequentially continuous} whenever $f(x)=\lim_k f(x_k)$ if $x_k\to x$ a.s.~in $H$,
	\end{itemize}
	and if $G=\bar{L}^0_\mathcal{F}$, then $f$ is said to be 
	\begin{itemize}
		\item \emph{sequentially lower semi-continuous} if $f(x)\leq \liminf f(x_k)$ if $x_k\to x$ a.s.~in $H$,
		\item \emph{sequentially upper semi-continuous} if $\limsup f(x_k)\leq f(x)$ if $x_k\to x$ a.s.~in $H$.
	\end{itemize}
\end{definition}

\begin{remark}\label{rem: Stmetric}
	{\bf 1.} If $(X,d)$ is a $\mathcal{G}$-conditional metric space then the metric $d:X\times X\rightarrow L^0_{\mathcal{G},+}$ is $\mathcal{G}$-stable, i.e. $d\left(\sum_k 1_{A_k}x_k, \sum_k 1_{A_k}y_k\right)=\sum_k 1_{A_k}d(x_k,y_k)$ for every sequences $(x_k)$ and $(y_k)$ in $X$ and $(A_k)\in \Pi_\mathcal{G}$. Indeed, denoting by
	$x=\sum_k 1_{A_k}x_k$ and $y=\sum_k 1_{A_k}y_k$ the respective
	concatenations, it follows from the triangular inequality that
	\begin{align*}
	1_{A_k}d(x,y)&\le 1_{A_k}d(x,x_k)+1_{A_k}d(x_k,y_k)+1_{A_k}d(y_k,y)=1_{A_k}d(x_k,y_k)\\
	&\le 1_{A_k}d(x_k,x)+1_{A_k}d(x,y)+1_{A_k}d(y_k,y)=1_{A_k}d(x,y)
	\end{align*}
	which shows that $1_{A_k}d(x,y)=1_{A_k}d(\sum_k 1_{A_k}x_k,\sum_k 1_{A_k}y_k)=1_{A_k}d(x_k,y_k)$ for all $k\in\mathbb{N}$. Summing up over all $k$ yields the desired $\mathcal{G}$-stability.
	
	{\bf 2.} Let $(X,d_X)$ and $(Y,d_Y)$ be two $\mathcal{G}$-conditional metric spaces. Then its product $X\times Y$ endowed with the $\mathcal{G}$-conditional metric $d_{X\times Y}((x,y),(x^\prime,y^\prime)=\max\{d_{X}(x,x^\prime),d_{Y}(y,y^\prime)\}$ is a $\mathcal{G}$-conditional metric space.
	In the following all products of conditional metric spaces are endowed with this conditional metric.   
	
	{\bf 3.} Let $(X,d_X)$ be a $\mathcal{G}$-conditional metric spaces. Then the set $\mathbf{X}$ of all pairs $(x,A)\in X\times\mathcal{G}$, where $(x,A)$ and $(y,B)$ are identified if $A=B$ and $1_A d(x,y)=0$ is a \emph{conditional set}. In general, a conditional set $\mathbf{Y}$ is an abstraction
	of this example, and can be viewed as a set-like structure on which $\mathcal{G}$ acts such that $\mathbf{Y}$ is closed w.r.t.~countable concatenations of its elements along partitions in $\Pi_{\mathcal{G}}$. Conditional set theory
	is investigated in \cite{drapeau2016algebra} and does not require a metric structure as in Definition \ref{def:condMetric}. For further results on conditional metric spaces in the context of conditional set theory we refer to \cite[Section 4]{drapeau2016algebra}.    
\end{remark}

We next introduce the parameter-dependent stochastic optimal control problem for conditional metric spaces. For a fixed finite time horizon $T\in\mathbb{N}$,
we consider a filtration $\mathcal{F}_0\subset \mathcal{F}_1\subset  \ldots \subset  \mathcal{F}_T=\mathcal{F}$. For simplicity, we often abbreviate the index $\mathcal{F}_t$ by $t$, and write for instance $L^0_{t}$ for $L^0_{\mathcal{F}_t}$.
For each $t=0,\ldots,T$, let $(X_t,d_{X_t})$ and $(Z_t,d_{Y_t})$ be $\mathcal{F}_t$-conditional metric spaces. 
Our aim is to study control problems for which the control set $\Theta_t$ depends on $\mathcal{F}_t$, but also on a state parameter $x\in X_t$. For every $t=0,\ldots,T-1$,  we assume that the \emph{state-dependent control set}
$\Theta_t$ satisfies   
\begin{enumerate}
	\item[(c1)] $\emptyset\neq\Theta_t(x)\subset  Z_t$ for all $x\in X_t$,
	\item[(c2)] $\Theta_t$ is $\mathcal{F}_t$-stable, i.e. 
	\[
	\Theta_t\Big(\sum_k 1_{A_k}x_k\Big)=\sum_k 1_{A_k} \Theta_t(x_k):=\Big\{\sum_k 1_{A_k} z_k\colon z_k\in\Theta_t(x_k) \text{ for all }k\Big\}
	\]
	for all $(A_k)\in \Pi_t$ and every sequence $(x_k)$ in $X_t$,
	\item[(c3)] for every $x\in X_t$, the set $\Theta_t(x)$ is \emph{conditionally sequentially compact}, i.e.~for every sequence $(z_n)$ in $\Theta_t(x)$ there exists a measurable subsequence $n_1<n_2<\cdots$ with $n_k\in L^0_t(\mathbb{N})$ such that $z_{n_k}\to z\in \Theta_t(x)$ a.s., 
	\item[(c4)] 
	for every sequence $(x_n)$ in $X_t$ such that $x_n\to x\in X_t$ a.s.~and every sequence $(z_n)$ in $\Theta_t(x_n)$ there exists a measurable subsequence $n_1<n_2<\cdots$ with $n_k\in L^0_t(\mathbb{N})$  	
	and a sequence $(z_k^\prime)$ in $\Theta_t(x)$ such that 
	$d_{Z_t}(z_{n_k},z_{k}^\prime)\to 0$ a.s. 
\end{enumerate}
Note that $\mathcal{F}_t$-stability of $\Theta_t$ implies that $\Theta_t(x)$ is $\mathcal{F}_t$-stable for all
$x\in X_t$.

We consider \emph{forward generators}
\[
v_t:X_t\times Z_t\rightarrow X_{t+1},\quad t=0,\ldots,T-1, 
\]
which are
\begin{enumerate}[label=(v\arabic*)]
	\item\label{v1} $\mathcal{F}_t$-stable, 
	\item\label{v2} sequentially continuous. 
\end{enumerate}
For every $x_t\in X_t$ we consider the set
\[
C_t(x_t):=\left\{\big((x_s)_{s=t+1}^T,(z_s)_{s=t}^{T-1}\big)
\colon x_{s+1}=v_s(x_s,z_s),z_s\in\Theta_s(x_s)\textnormal{ for all }s=t,\ldots,T-1\right\}
\]   
of all parameter processes $(x_s)_{s=t}^T$ which can be realized by the state-dependent controls $z_s\in\Theta_t(x_s)$
for $s=t,\dots,T-1$. 

As for the objective function, we consider \emph{backward generators} 
\[
u_t:X_t\times\underbar{L}^0_{t+1}\times Z_t \rightarrow\underbar{L}^0_t,\quad t=0,1,\ldots,T-1, 
\]
which are
\begin{enumerate}[label=(u\arabic*)]
	\item\label{u1} $\mathcal{F}_t$-stable, 
	\item\label{u2} increasing in the second component, 	
	\item\label{u3} sequentially upper semi-continuous.  
\end{enumerate}
We assume that $u_T: X_T\rightarrow\underbar{L}^0_T$ is $\mathcal{F}_{T}$-stable and sequentially upper semi-continuous. Given such a family $(u_t)_{t=0}^T$ of backward generators, our goal is to maximize 
\begin{equation}\label{opt:global}
y_t(x_t):=\sup_{((x_s)_{s=t+1}^T,(z_s)_{s=t}^{T-1})\in C_t(x_t) }u_t(x_t,\cdot,z_t)\circ\cdots\circ u_{T-1}(x_{T-1},\cdot,z_{T-1})\circ u_T (x_T) 
\end{equation}
over all realizable state processes initialized at $x_t\in X_t$.  
\begin{remark}
	The objective function in the stochastic control problem \eqref{opt:global} is recursively defined. 
	Its generators (aggregators) are functions between conditional metric spaces which satisfy monotonicity and semi-continuity. The aggregators are not necessarily (conditional) expected utilities. 
	In case of (conditional) expected utility, the generators are closely related with dynamic and conditional risk measures, see \cite{acciaio2011dynamic,bielecki2016dynamic,detlefsen2005conditional,filipovic2012approaches,frittelli2011conditional}. 
	The preferences which underly conditional expected utility functionals were studied in \cite{drapeau2016conditional} under the name of conditional preference orders.
	
	In decision theory, there is an extensive literature on recursive utilities starting with the seminal work \cite{kreps1978temporal,kreps1979temporal}. 
	The preferences therein are defined on sets of temporal lotteries (probability trees), and follow a kind of Bellman recursive structure which is  similar to the construction above on a formal level (see \cite[Theorem 1]{kreps1978temporal}). 
	This was later extended in \cite{epstein1989substitution} where non-expected utilities were incorporated as well, and established under the name of Epstein-Zin utilities.  
	See also  \cite{machina2008nonexpected} for a survey on non-expected utility theory. 
	With the techniques of conditional analysis and based on results in BSDE theory, \cite{cheridito2011optimal} solves a utility maximization problem in continuous time for Epstein-Zin utilities. 
\end{remark}

The following result shows that the global supremum in \eqref{opt:global} is attained and can be reduced to local optimization problems by the following Bellman's principle.
\begin{theorem}
	\label{th:existence}
	Suppose that (c1)--(c4), \ref{v1}--\ref{v2},  and \ref{u1}--\ref{u3} are fulfilled. Then the functions $y_t\colon X_{t}\rightarrow\underbar{L}^0_t$ are $\mathcal{F}_t$-stable and sequentially upper semi-continuous for all $t=0,\dots, T$, and can be computed by backward recursion	
	\begin{align*}
	y_T(x_T)&=u_T(x_T)\\
	y_t(x_t)&=\max_{z_t\in\Theta_t(x_t)} u_t(x_t,y_{t+1}(v_t(x_t,z_t)),z_t), \quad t=0,\ldots,T-1.
	\end{align*}
	Moreover, for every $x_t\in X_t$ the process
	$((x^\ast_s)_{s=t}^T,(z^\ast_s)_{s=t}^{T-1})$ given by
	$x_t^\ast=x_t$ and the forward recursion
	\begin{equation}\label{eq:rec}
	x^\ast_{s+1}=v_s(x^\ast_s,z^\ast_s)\quad\mbox{where}\quad z_s^\ast\in \mathop{\rm argmax}_{z_s\in\Theta_s(x_s^\ast)} u_s\big(x_s^\ast,y_{s+1}(v_t(x^\ast_s,z_s)),z_s\big),\quad s=t,\dots T-1,
	\end{equation} 
	satisfies $((x^\ast_s)_{s=t+1}^T,(z^\ast_s)_{s=t}^{T-1})\in C_t(x_t)$ and 	
	\[
	y_t(x_t)= u_t(x_t,\cdot,z^\ast_t)\circ\cdots\circ u_{T-1}(x^\ast_{T-1},\cdot,z^\ast_{T-1})\circ u_T (x^\ast_T). 
	\]	
\end{theorem}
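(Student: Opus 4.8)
The plan is to argue by backward induction on $t$, establishing at each step the three claims simultaneously: that $y_t$ is $\mathcal{F}_t$-stable, that $y_t$ is sequentially upper semi-continuous, and that the one-period $\sup$ in \eqref{prob:oneperiod} is attained (hence is a $\max$). The base case $t=T$ is the hypothesis on $u_T$. For the inductive step, assume $y_{t+1}\colon X_{t+1}\to\underbar{L}^0_{t+1}$ is $\mathcal{F}_{t+1}$-stable and sequentially upper semi-continuous, and define the auxiliary map
\[
\Phi_t\colon X_t\times Z_t\to\underbar{L}^0_t,\qquad \Phi_t(x,z):=u_t\big(x,y_{t+1}(v_t(x,z)),z\big).
\]
I would first check that $\Phi_t$ is $\mathcal{F}_t$-stable: this is a composition of $\mathcal{F}_t$-stable maps, using \ref{v1}, \ref{u1}, stability of $y_{t+1}$, and the fact (Remark \ref{rem: Stmetric}) that concatenation is compatible with the product metric on $X_t\times Z_t$. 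Then $\Phi_t$ is sequentially upper semi-continuous: if $(x_n,z_n)\to(x,z)$ a.s., then $v_t(x_n,z_n)\to v_t(x,z)$ a.s.\ by \ref{v2}, hence $\limsup y_{t+1}(v_t(x_n,z_n))\le y_{t+1}(v_t(x,z))$ by the inductive hypothesis; since $u_t$ is increasing in its second argument (\ref{u2}) and sequentially upper semi-continuous (\ref{u3}), passing to a measurable subsequence along which $y_{t+1}(v_t(x_n,z_n))$ converges to its $\limsup$ and invoking monotonicity gives $\limsup\Phi_t(x_n,z_n)\le\Phi_t(x,z)$. (A small technical point: one works with measurable subsequences and the conditional/essential $\limsup$; the argument is the conditional analogue of the classical "monotone + u.s.c.\ composed with u.s.c.\ is u.s.c."\ and I would spell it out carefully since the second slot of $u_t$ takes values in $\underbar{L}^0_{t+1}$, not $X_{t+1}$.)

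Next I would prove that $y_t(x)=\sup_{z\in\Theta_t(x)}\Phi_t(x,z)$ is attained for each fixed $x\in X_t$. This is the conditional version of "an u.s.c.\ function on a sequentially compact set attains its supremum": take a sequence $(z_n)$ in $\Theta_t(x)$ with $\Phi_t(x,z_n)\to y_t(x)$ in $\bar L^0_t$ (such a maximizing sequence exists by taking a measurable-subsequence/diagonal argument, using that the essential supremum over a stable family is approximated by countable sub-suprema — this uses $\mathcal{F}_t$-stability of $\Theta_t(x)$ from (c2)); by conditional sequential compactness (c3), extract a measurable subsequence $z_{n_k}\to z^\ast\in\Theta_t(x)$ a.s.; then $y_t(x)=\limsup\Phi_t(x,z_{n_k})\le\Phi_t(x,z^\ast)\le y_t(x)$, so equality holds and $z^\ast\in\mathop{\rm argmax}$. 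Stability of $y_t$ then follows: for $x=\sum_k 1_{A_k}x_k$, stability of $\Theta_t$ and of $\Phi_t$ gives $y_t(\sum_k 1_{A_k}x_k)=\sup_{z\in\sum_k 1_{A_k}\Theta_t(x_k)}\Phi_t(\sum_k 1_{A_k}x_k,z)$, and by pasting optimizers $z_k^\ast\in\mathop{\rm argmax}$ on each $A_k$ one gets $\sum_k 1_{A_k}y_t(x_k)$; the reverse inequality $1_{A_k}y_t(x)\ge 1_{A_k}\Phi_t(x,z)$ for $z$ supported correctly is routine.

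The main obstacle — and the step I would budget the most care for — is the sequential upper semi-continuity of $x\mapsto y_t(x)$ itself, because it is here that the regularity condition (c4) on the control correspondence enters. Suppose $x_n\to x$ a.s.\ in $X_t$; I want $\limsup y_t(x_n)\le y_t(x)$. Pick, for each $n$, an optimizer $z_n^\ast\in\Theta_t(x_n)$ with $\Phi_t(x_n,z_n^\ast)=y_t(x_n)$ (available from the attainment just proved), and pass to a measurable subsequence along which $y_t(x_{n})$ converges to its essential $\limsup$. By (c4) there is a further measurable subsequence and a sequence $(z_k')$ in $\Theta_t(x)$ with $d_{Z_t}(z_{n_k}^\ast,z_k')\to 0$ a.s.; this forces $(x_{n_k},z_{n_k}^\ast)$ and $(x,z_k')$ to have distance tending to $0$ in the product metric, up to the $X_t$-component which already converges, so a triangle-inequality argument gives $d_{X_t\times Z_t}\big((x_{n_k},z_{n_k}^\ast),(x,z_k')\big)\to 0$... but $(z_k')$ need not converge, so one cannot directly apply u.s.c.\ of $\Phi_t$ at a single point. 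The fix is to invoke conditional sequential compactness of $\Theta_t(x)$ (c3) to extract from $(z_k')$ a convergent measurable subsequence $z_k'\to z'\in\Theta_t(x)$, whence also $z_{n_k}^\ast\to z'$ a.s.; then u.s.c.\ of $\Phi_t$ at $(x,z')$ yields $\limsup y_t(x_{n_k})=\limsup\Phi_t(x_{n_k},z_{n_k}^\ast)\le\Phi_t(x,z')\le y_t(x)$. Chaining the local Bellman identities then gives the global statement \eqref{opt:global}: the forward recursion \eqref{eq:rec} is well-defined because each $\mathop{\rm argmax}$ is nonempty and $\mathcal{F}_s$-stable (so a selection exists), it produces an admissible process in $C_t(x_t)$ by construction, and telescoping the one-period maxima using monotonicity \ref{u2} shows the value it realizes equals $y_t(x_t)$, while the reverse inequality $y_t(x_t)\ge$ (value of any admissible process) is immediate from the recursive definition of $y_t$ together with monotonicity.
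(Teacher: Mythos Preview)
Your proposal is correct and follows essentially the same route as the paper's proof: backward induction, the auxiliary map $\Phi_t(x,z)=u_t(x,y_{t+1}(v_t(x,z)),z)$, its $\mathcal{F}_t$-stability and sequential upper semi-continuity, attainment of the one-period supremum via a maximizing sequence plus (c3), and upper semi-continuity of $y_t$ via optimizers $z_n^\ast$, (c4), then (c3), then u.s.c.\ of $\Phi_t$. The only cosmetic differences are that the paper argues the u.s.c.\ of $y_t$ by contradiction (with a separating $r\in L^0_{t,++}$) rather than directly, and it handles the ``second-slot'' technicality you flag by showing explicitly that $\sup_{k\ge 1}y_{t+1}(v_t(x_k,z_k))\in\underbar{L}^0_{t+1}$ before invoking \ref{u2} and \ref{u3}; your sketch of the final Bellman telescoping is also what the paper spells out in detail.
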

\begin{proof}
	The proof is by backward induction. For $t=T$ it follows from \eqref{opt:global} that $y_T=u_T$ which by assumption is an $\mathcal{F}_t$-stable and sequentially upper semi-continuous function from $X_T$ to $\underbar{L}^0_T$.
	
	As for the induction step, assume that $y_{t+1}\colon X_{t+1}\rightarrow\underbar{L}^0_{t+1}$
	is $\mathcal{F}_{t+1}$-stable
	and sequentially upper semi-continuous, and that for each $x_{t+1}\in X_{t+1}$ there exists $((x^\ast_s)_{s=t+2}^T,(z^\ast_s)_{s=t+1}^{T-1})\in C_{t+1}(x_{t+1})$ such that $y_{t+1}(x_{t+1})=u_{t+1}(x_{t+1},\cdot,z^\ast_{t+1})\circ\cdots\circ u_T (x^\ast_T)$. By \ref{u1} and \ref{v1} the function
	\[
	X_t\times Z_t\ni(x,z)\mapsto u_t\big(x,y_{t+1}(v_t(x,z)),z\big)
	\]
	is $\mathcal{F}_t$-stable. Moreover, it is sequentially upper semi-continuous. Indeed, let $(x_k,z_k)$ be a sequence in $X_t\times Z_t$ such that $x_k\to x\in X_t$ a.s. and $z_k\to z\in Z_t$ a.s. Since
	$v(x_k,z_k)\to v(x,z)$ a.s. by \ref{v2} it follows from the induction hypothesis that 
	\[
	\limsup_{k\to\infty} y_{t+1}(v_t(x_k,z_k))\le y_{t+1}(v(x,z))<+\infty. 
	\]
	Since 
	\[
	\big\{ \sup_{k\ge 1} y_{t+1}(v_t(x_{k},z_{k}))=+\infty \big\}
	=\bigcap_{k\ge 1}\big\{ \sup_{k^\prime\ge k} y_{t+1}(v_t(x_{k^\prime},z_{k^\prime}))=+\infty \big\}
	=\big\{\limsup_{k\to\infty} y_{t+1}(v_t(x_k,z_k))=+\infty\big\}, 
	\]
	we have $\sup_{k\ge 1} y_{t+1}(v_t(x_{k},z_{k}))\in \underbar{L}^0_{t+1}$. Hence, by \ref{u2}, \ref{u3} and \ref{v2}  we get
	\begin{align}
	\limsup_{k\to\infty} u_t\big(x_k,y_{t+1}(v_t(x_k,z_k)),z_k\big)
	&\le \limsup_{k\to\infty} u_t\big(x_k,\sup_{k^\prime\ge k} y_{t+1}(v_t(x_{k^\prime},z_{k^\prime})),z_k\big) \nonumber\\
	&\le u_t\big(x,\limsup_{k\to\infty} y_{t+1}(v_t(x_{k},z_{k})),z\big) \nonumber\\
	&\le u_t\big(x, y_{t+1}(v_t(x,z)),z\big)\label{eq:usc}
	\end{align}
	which shows the desired sequential upper semi-continuity.
	As a consequence, the supremum in
	\begin{equation}\label{eq:ft}
	f_t(x_t):=\sup_{z\in\Theta_t(x_t)} u_t\big(x_t,y_{t+1}(v_t(x_t,z)),z\big)
	\end{equation}
	is attained for each $x_t\in X_t$. Indeed, since $z\mapsto u_t\big(x,y_{t+1}(v_t(x,z)),z\big)$ and $\Theta_t(x_t)$ are $\mathcal{F}_t$-stable, it follows from standard properties of the essential supremum that there exists a sequence $z_n\in\Theta_t(x_t)$ such that 
	\[
	u_t\big(x_t,y_{t+1}(v_t(x_t,z_n)),z_n\big)\to f_t(x_t)\quad\mbox{a.s.}
	\]
	By (c3)  there is a measurable subsequence $n_1<n_2<\cdots$ with $n_k\in L^0_t(\mathbb{N})$ such that $z_{n_k}\to z\in \Theta_t(x_t)$ a.s. Since $z\mapsto u_t\big(x,y_{t+1}(v_t(x,z)),z\big)$ is sequentially upper semi-continuous and $\mathcal{F}_t$-stable, it follows that 
	\[
	u_t\big(x_t,y_{t+1}(v_t(x_t,z)),z\big)\ge
	\limsup_{k\to\infty}
	u_t\big(x_t,y_{t+1}(v_t(x_t,z_{n_k})),z_{n_k}\big) = f_t(x_t)
	\]
	which shows that the supremum in \eqref{eq:ft} is attained. 
	
	We next show that $f_t:X_t\to \underbar{L}^0_{t+1}$ is sequentially upper semi-continuous. 
	By contradiction, suppose that $(x_k)$ is a sequence in $X_t$ such that $x_k\to x\in X_t$ a.s. and   $f_t(x)<\limsup_k f_t(x_k)$ on some $A\in\mathcal{F}$ with $\mathbb{P}(A)>0$. 
	Note that $f_t$ is $\mathcal{F}_t$-stable. Thus, by possibly passing to a measurable subsequence, we can suppose that there exists $r\in L^0_{t,++}$ such that
	\begin{equation}
	\label{eq: separation}
	f_t(x)+r< f_t(x_k)\text{ on }A,\quad\text{ for all }k\in\mathbb{N}.
	\end{equation}
	
	Denote by $z_k\in\Theta_t(x_k)$ a respective maximizer of $f_t(x_k)$. By (c4) there exists $z_k^\prime\in\Theta_t(x)$ such that $d_{Z_t}(z_k,z^\prime_k)\to 0$ a.s.~by possibly passing to a measurable subsequence. By (c3) there exists a measurable subsequence
	$k_1<k_2<\cdots$ with $k_l\in L^0_t(\mathbb{N})$ such that $z^\prime_{k_l}\to z^\prime\in\Theta_t(x)$ a.s. Since $d_{Z_t}(z_{k_l},z^\prime_{k_l})\to 0$ a.s.~by $\mathcal{F}_t$-stability of the conditional metric $d_{Z_t}$, it follows from the triangular inequality that $z_{k_l}\to z^\prime\in\Theta_t(x)$ a.s. By the $\mathcal{F}_t$-stability of $f_t$ and (c2) it follows that $z_{k_l}$ is in $\Theta_t(x_{k_l})$ and maximizes $f_t(x_{k_l})$. Hence, it follows from \eqref{eq:usc} that
	\begin{align*}
	\limsup_{l\to\infty} f_t(x_{k_l})
	&=\limsup_{l\to\infty} u_t\big(x_{k_l},y_{t+1}(v_t(x_{k_l},z_{k_l})),z_{k_l}\big)\\
	&\le u_t\big(x,y_{t+1}(v_t(x,z^\prime)),z^\prime\big)\\
	&\le \sup_{z\in\Theta_t(x)} u_t\big(x,y_{t+1}(v_t(x,z),z\big)=f_t(x).
	\end{align*}
	Notice that, due to the $\mathcal{F}_t$-stability of $f_t$, (\ref{eq: separation}) is satisfied for any measurable subsequence of $(x_k)$. 
	Thus, we have that $f_t(x)+r\le\limsup_{l\to\infty} f_t(x_{k_l})\le f_t(x)$ on $A$, which is a contradiction. 
	We conclude that $f_t$ is sequentially upper semi-continuous.
	
	Finally, we show that $y_t=f_t$. By induction hypothesis, for every $x_t\in X_t$ and $z_t\in Z_t$ there exists $\big((x^\ast_s)_{s=t+2}^T,(z^\ast_s)_{s=t+1}^{T-1}  \big)\in C_{t+1}(v_t(x_t,z_t))$ such that 
	\[
	y_{t+1}(v_t(x_t,z_t))=u_{t+1}(v_t(x_t,z_t),\cdot,z^\ast_{t+1})\circ\cdots\circ u_{T-1}(x^\ast_{T-1},\cdot,z^\ast_{T-1})\circ u_T(z^\ast_T).
	\]
	In particular, for $x_t\in X_t$ and $z_t^\ast\in Z_t$ being a respective maximizer in \eqref{eq:ft} one has
	\begin{align*}
	f_t(x_t)&=\sup_{z\in\Theta_t(x_t)} u_t\big(x_t,y_{t+1}(v_t(x_t,z)),z\big)\\
	&= u_t\big(x_t,y_{t+1}(v_t(x_t,z_t^\ast)),z_t^\ast\big)\\
	&= u_t\big(x_t,\cdot,z_t^\ast\big)\circ u_{t+1}(v_t(x_t,z^\ast_t),\cdot,z^\ast_{t+1})\circ\cdots\circ
	u_{T-1}(x^\ast_{T-1},\cdot,z^\ast_{T-1})\circ u_T(z^\ast_T) \\
	&= \sup_{((x_s)_{s=t+2}^T,(z_s)_{s=t+1}^T)\in C_{t+1}(v(x_t,z^\ast_t)) }
	u_t\big(x_t,\cdot,z_t^\ast\big)\circ u_{t+1}(v_t(x_t,z_t),\cdot,z_{t+1})\circ\cdots\circ\circ u_T(z_T) \\
	&= \sup_{z_t\in\Theta_t(x_t)}
	\sup_{((x_s)_{s=t+2}^T,(z_s)_{s=t+1}^T)\in C_{t+1}(v(x_t,z_t)) }
	u_t\big(x_t,\cdot,z_t\big)\circ u_{t+1}(v_t(x_t,z_t),\cdot,z_{t+1})\circ\cdots\circ u_T(z_T) \\
	&= \sup_{((x_s)_{s=t+1}^T,(z_s)_{s=t}^T)\in C_{t}(x_t) }
	u_t\big(x_t,\cdot,z_t\big)\circ u_{t+1}(v_t(x_t,z_t),\cdot,z_{t+1})\circ\cdots\circ\circ u_T(z_T) \\
	&=y_t(x_t).
	\end{align*}
	This shows that
	$((x^\ast_s)_{s=t+1}^T,(z^\ast_s)_{s=t}^T)\in C_{t}(x_t)$
	is an optimizer of \eqref{opt:global} whenever it
	satisfies the local optimality criterion
	\[
	z^\ast_s\in\mathop{\rm argmax}_{z\in\Theta_t(x^\ast_t)} u_s\big(x_s^\ast,y_{s+1}(v_t(x^\ast_s,z_s)),z_s\big)\quad\mbox{and}\quad x^\ast_{s+1}=v_s(x^\ast_s,z^\ast_s)
	\]
	for all $s=t,\dots,T$, where $x_t^\ast=x_t$. In particular, every process which satisfies the forward recursion \eqref{eq:rec} is an optimizer for \eqref{opt:global}.
\end{proof}	

\begin{examples}\label{rem:extensions} 
	As for the illustration we provide examples of $\mathcal{F}_t$-conditional metric spaces which are of interest for the control and parameter spaces in Theorem \ref{th:existence}.
	
	{\bf 1.} Given a nonempty metric space $(X,d)$, denote by $L^0_t(X)$ the set of all strongly $\mathcal{F}_t$-measurable functions $x\colon \Omega\to X$,
	i.e.~the set of those $x$ for which there exists a sequence $(x^n)$ of countable simple functions $x^n=\sum_{k} 1_{A^n_k} x^n_k $  with $x^n_k\in X$ and $(A^n_k)\in\Pi_t$, such that
	$d(x(\omega),x^n(\omega))\to 0$ for a.a.~$\omega\in\Omega$.
	Notice that the metric $d$ extends from $X$ to $L^0_t(X)$ with values in $L^0_{t,+}$ by defining 
	\[d_{L^0_t(X)}(x,\bar x):=\lim_{n\to\infty} d(x^n,\bar x^n)\] where 
	$x^n=\sum_{k}  1_{A^n_k} x^n_k$ and  $\bar x^n=\sum_{k}  1_{\bar A^n_k} \bar x^n_k$ are  sequences of countable simple functions such that $d(x(\omega),x^n(\omega))\to 0$ and 
	$d(\bar x(\omega),\bar x^n(\omega))\to 0$ for a.a.~$\omega\in\Omega$, and   
	\[d(x^n,\bar x^n):=\sum_{k,k^\prime} 1_{A_k^n\cap \bar A^n_{k^\prime}} d(x^n_k,\bar x^n_{k^\prime}).\]
	Notice that $d_{L^0_t(X)}(x,\bar x)$ does not depend on the choice of approximating sequences $(x^n)$ and $(\bar x^n)$. 
	Then $(L^0_t(X),d_{L^0_t(X)})$ is a $\mathcal{F}_t$-conditional metric space. For instance,
	if $(x_k)$ is a sequence in $L^0_t(X)$ such that $x_k$ is the limit of the countable sequence $(x^n_k)$ and $(A_k)\in\Pi_t$ then the concatenation $\sum_k  1_{A_k} x_k$ is the unique element in $L^0_t(X)$ given as the limit of
	the countable simple functions $\sum_k 1_{A_k} x^n_k $ for $n\to\infty$. 
	
	{\bf 2.} The conditional Euclidean space with measurable dimension  $n=\sum_k  1_{A_k} n_k\in L^0_t(\mathbb{N})$ is defined as 
	\[
	L^0_t(\mathbb{R})^n=\sum_k  1_{A_k}  L^0_t(\mathbb{R}^{n_k}):=\Big\{\sum_k  1_{A_k} x_k  \colon x_k\in L^0_t(\mathbb{R}^{n_k}) \text{ for all }k\Big\}. 
	\]
	The $\mathcal{F}_t$-conditional metric on $L^0_t(\mathbb{R})^n$ is defined by
	\[ d_{L^0_t(\mathbb{R})^n}(x,\bar x):=\sum_k 1_{A_k} d_{ L^0_t(\mathbb{R}^{n_k}) }(x_k,\bar x_k), \] 
	where $x=\sum_k  1_{A_k} x_k$ and $\bar x=\sum_k  1_{A_k} \bar x_k$. Here, $d_{L^0_t(\mathbb{R}^{n_k})}$ denotes the $\mathcal{F}_t$-conditional metric on $L^0_t(\mathbb{R}^{n_k})$ which extends the Euclidean metric on $\mathbb{R}^{n_k}$ as defined in the previous example.
	Straightforward verification shows that $(L^0_t(\mathbb{R})^n, d_{L^0_t(\mathbb{R})^n})$ is a $\mathcal{F}_t$-conditional metric space.
	
	{\bf 3.} For $1\leq p<\infty$, we define the conditional $L^p$-space 
	\[
	L^p_t:=\{x\in L^0_T \colon \mathbb{E}[|x|^p|\mathcal{F}_t]<+\infty \text{ a.s.}\}  
	\]
	with $\mathcal{F}_t$-conditional metric
	$d_{L^p_t}(x,\bar x):=\mathbb{E}[|x-\bar x|^p|\mathcal{F}_t]^{1/p}$. By definition, $(L^p_t,d_{L^p_t})$ is a $\mathcal{F}_t$-conditional metric space.
\end{examples}

\section{Compactness condition for the control set}\label{sec:compcond}

\subsection{The finite dimensional case}
Suppose that $Z_t=L^0_t(\mathbb{R}^d)$.  As shown in Example \ref{rem:extensions} the Euclidean metric of $\mathbb{R}^d$ extends to the $\mathcal{F}_t$-conditional metric $d_{L^0_t(\mathbb{R}^d)}:L^0_t(\mathbb{R}^d)\to L^0_{t,+}$.

\begin{proposition}\label{prop:controlfinite}
	Suppose that for each $t=0,\dots,T-1$, the control set $\Theta_t$ satisfies (c1), (c2) and the following conditions:
	\begin{itemize}
		\item[(i)] $\big\{(x,z)\in X_t\times L^0_t(\mathbb{R}^d):z\in\Theta_t(x)\big\}$ is sequentially closed, 
		\item[(ii)] for every  sequence $(x_n)$ in $X_t$
		with $x_n\to x\in X_t$ a.s.~there exists $M\in L^0_{t,+}$ such that $d_{L^0_t(\mathbb{R}^d)}(z,0)\le M$ for all $z\in\bigcup_n \Theta_t(x_n)$.
	\end{itemize}
	Then, the control set $\Theta_t$ satisfies (c1)-(c4).
\end{proposition}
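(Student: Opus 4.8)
The conditions (c1) and (c2) are part of the hypothesis, so only (c3) and (c4) require work, and the plan is to derive both from the conditional Bolzano-Weierstra\ss\ theorem (equivalently the conditional Heine-Borel theorem, cf.\ \cite{cheridito2015conditional,drapeau2016algebra}) in the conditional Euclidean space $L^0_t(\mathbb{R}^d)$ of Example \ref{rem:extensions}: if $(z_n)$ is a sequence in $L^0_t(\mathbb{R}^d)$ for which there exists a \emph{single} $M\in L^0_{t,+}$ with $d_{L^0_t(\mathbb{R}^d)}(z_n,0)\le M$ a.s.\ for all $n$, then there is a measurable subsequence $n_1<n_2<\cdots$, $n_k\in L^0_t(\mathbb{N})$, and some $z\in L^0_t(\mathbb{R}^d)$ with $z_{n_k}\to z$ a.s. (If a self-contained argument is wanted, this follows from the scalar conditional Bolzano-Weierstra\ss\ lemma by a coordinatewise diagonal extraction.) I would also record the elementary fact that a.s.\ convergence is stable under measurable-subsequence extraction, i.e.\ $x_n\to x$ a.s.\ implies $x_{n_k}\to x$ a.s.\ for any strictly increasing $(n_k)$ in $L^0_t(\mathbb{N})$, since such $(n_k)$ satisfies $n_k\to\infty$ a.s.; this uses $\mathcal{F}_t$-stability of the conditional metric.

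For (c3): fix $x\in X_t$. Applying (ii) to the constant sequence $x_n\equiv x$ (which converges to $x$ a.s.) and using $\bigcup_n\Theta_t(x_n)=\Theta_t(x)$, one obtains $M\in L^0_{t,+}$ with $d_{L^0_t(\mathbb{R}^d)}(z,0)\le M$ for all $z\in\Theta_t(x)$. Given any sequence $(z_n)$ in $\Theta_t(x)$, the conditional Bolzano-Weierstra\ss\ theorem yields a measurable subsequence with $z_{n_k}\to z$ a.s.\ for some $z\in L^0_t(\mathbb{R}^d)$, and $z_{n_k}\in\Theta_t(x)$ by $\mathcal{F}_t$-stability of $\Theta_t(x)$. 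Since $d_{X_t\times L^0_t(\mathbb{R}^d)}\big((x,z_{n_k}),(x,z)\big)=d_{L^0_t(\mathbb{R}^d)}(z_{n_k},z)\to 0$ a.s., the sequence $(x,z_{n_k})$ lies in the set of (i) and converges a.s.\ to $(x,z)$, so sequential closedness in (i) gives $z\in\Theta_t(x)$. This is exactly conditional sequential compactness, hence (c3).

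For (c4): let $(x_n)$ in $X_t$ with $x_n\to x\in X_t$ a.s.\ and $z_n\in\Theta_t(x_n)$. By (ii) there is a single $M\in L^0_{t,+}$ with $d_{L^0_t(\mathbb{R}^d)}(z,0)\le M$ for all $z\in\bigcup_n\Theta_t(x_n)$, in particular for all $z_n$. Conditional Bolzano-Weierstra\ss\ then gives a measurable subsequence $z_{n_k}\to z$ a.s.\ with $z\in L^0_t(\mathbb{R}^d)$, and $z_{n_k}\in\Theta_t(x_{n_k})$ by (c2). Since $x_{n_k}\to x$ a.s.\ as well, $(x_{n_k},z_{n_k})$ lies in the set of (i) and converges a.s.\ to $(x,z)$, whence $z\in\Theta_t(x)$ by (i). Taking the constant sequence $z_k':=z\in\Theta_t(x)$ for every $k$, one gets $d_{Z_t}(z_{n_k},z_k')=d_{L^0_t(\mathbb{R}^d)}(z_{n_k},z)\to 0$ a.s., which is (c4).

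The only step of genuine substance is the conditional Bolzano-Weierstra\ss/Heine-Borel theorem, together with the checks that the extracted a.s.\ limit is again an element of $L^0_t(\mathbb{R}^d)$ (i.e.\ still strongly $\mathcal{F}_t$-measurable) and that the measurable subsequence indices compose correctly; the rest is bookkeeping with (i) and (ii). A secondary subtlety worth stressing is that (ii) must supply one bound $M$ valid simultaneously along the whole (sub)sequence — not a separate bound per $\Theta_t(x_n)$ — which is precisely what the conditional Bolzano-Weierstra\ss\ theorem requires; in the derivation of (c3) this is what forces the constant-sequence trick so that (ii) becomes applicable to a fixed set $\Theta_t(x)$.
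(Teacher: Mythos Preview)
Your proof is correct and follows essentially the same route as the paper's: both rely on the uniform bound from (ii), the conditional Bolzano--Weierstra\ss\ theorem in $L^0_t(\mathbb{R}^d)$, the $\mathcal{F}_t$-stability (c2) to keep $z_{n_k}\in\Theta_t(x_{n_k})$, and the sequential closedness (i) to place the limit in $\Theta_t(x)$. The only cosmetic difference is that the paper establishes (c4) first and then obtains (c3) by specializing to the constant sequence $x_n\equiv x$, whereas you treat (c3) first; your additional remarks on the uniform bound and on the choice $z_k':=z$ just make explicit what the paper leaves implicit.
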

\begin{proof}
	Let $(x_n)$ in $X_t$ be a sequence such that $x_n\to x\in X_t$ a.s., and $z_n\in \Theta_t(x_n)$. Since by assumption $d_{L^0_t(\mathbb{R}^d)}(z_n,0)\le M$ for some $M\in L^0_{t,+}$, the conditional Bolzano-Weierstrass theorem
	\cite[Theorem 3.8]{cheridito2015conditional}
	implies a measurable subsequence $n_1<n_2<\cdots$ with $n_k\in L^0_t(\mathbb{N})$ such that  $d_{L^0_t(\mathbb{R}^d)}(z_{n_k},z)\to 0$ a.s. for some $z\in L^0_t(\mathbb{R}^d)$. Since $\Theta_t$ satisfies (c2) one has $z_{n_k}\in\Theta_t(x_{n_k})$,
	and therefore $z\in \Theta_t(x)$ by (i).
	This shows (c4) and (c3) follows by considering the constant sequence $x_n=x$ for all $n\in\mathbb{N}$.
\end{proof}	


\begin{example}\label{ex:riskconstarints}
	Let $(S_t)_{t=0}^T$ be a $d$-dimensional $(\mathcal{F}_t)$-adapted price process. Given an initial investment $x_0>0$, we consider the wealth process
	\[
	x_{t+1}=v_t(x_t,z_t):=x_t+ \vartheta_t\cdot\Delta S_{t+1}-c_t,
	\]
	where the control $z_t=(\vartheta_t,c_t)\in L^0_t(\mathbb{R}^d)\times L^0_+$ consists of an investment strategy $\vartheta_t\in L^0_t(\mathbb{R^d})$ and a 
	consumption $c_t\in L^0_{t,+}$. The forward generator $v_t:L^0_t \times L^0_t(\mathbb{R}^d\times\mathbb{R}_+)\to L^0_{t+1}$
	satisfies \ref{v1} and \ref{v2}. We assume that the wealth process is regulated by
	\begin{equation}\label{eq:risk}
	\rho_t(x_{t+1})\le 0,
	\end{equation}
	i.e. $x_{t+1}$ is acceptable w.r.t.~a $\mathcal{F}_t$-conditional convex risk measure
	$\rho_t:L^0_{t+1}\to \bar{L}^0_t$ for all $t=0,\dots,T-1$. Recall that a $\mathcal{F}_t$-conditional convex risk measure is
	\begin{itemize}
		\item \emph{normalized}, i.e. $\rho_t(0)=0$,
		\item \emph{monotone}, i.e. $\rho_t(x)\le\rho_t(y)$ for all $x,y\in L^0_{t+1}$ with $x\ge y$,
		\item \emph{$\mathcal{F}_t$-translation invariant}, i.e. $\rho_t(x+m)=\rho(x)-m$ for all $x\in L^0_{t+1}$ and $m\in L^0_t$,
		\item \emph{$\mathcal{F}_t$-convex}, i.e. $\rho_t(\lambda x+(1-\lambda)y)\le\lambda\rho_t(x)+(1-\lambda)\rho_t(y)$ for all $x,y\in L^0_{t+1}$ and $\lambda\in L^0_t$ with $0\le \lambda\le 1$.
	\end{itemize}
	By $\mathcal{F}_t$-translation invariance it follows that \eqref{eq:risk} is equivalent to $\rho_t(\vartheta_t\cdot\Delta S_{t+1})\le x_t-c_t$. 
	In addition, $\rho_t$ is $\mathcal{F}_t$-stable since it is $\mathcal{F}_t$-convex (see \cite[Lemma 4.3]{cheridito2015conditional}).
	Hence we consider the (wealth-dependent) control set
	\[
	\Theta_t(x_t):=\left\{z_t=(\vartheta_t,c_t)\in L^0_t(\mathbb{R}^d\times \mathbb{R}_+): \rho_t(\vartheta_t\cdot\Delta S_{t+1})\le x_t-c_t\mbox{ and }0\le c_t\le x_t \right\},
	\] 
	which is $\mathcal{F}_t$-stable. Suppose that for every $\vartheta\in L^0_t(\mathbb{R}^d)$  one has $\mathbb{P}(\vartheta\cdot\Delta S_{t+1}<0\mid\mathcal{F}_t)>0$ on $\{\vartheta\neq 0\}$ and therefore $\mathbb{P}(\vartheta\cdot\Delta S_{t+1}>0\mid\mathcal{F}_t)>0$ on $\{\vartheta\neq 0\}$. 
	Moreover,
	we assume that $\rho_t(\vartheta\cdot \Delta S_{t+1})\in L^0_t$ for all $\vartheta\in L^0_t(\mathbb{R}^d)$, and
	$\rho_t$ is $\mathcal{F}_t$-\emph{sensitive to large losses}, i.e. $\lim_{m\to\infty}\rho_t(m y)=+\infty$ on $\{\mathbb{P}(y<0\mid\mathcal{F}_t)>0\}$. 
	Then the control set $\Theta_t$ satisfies (i) and (ii) of Proposition \ref{prop:controlfinite}.	
	Indeed, consider the function $f_t\colon  L^0_t(\mathbb{R}^d\times\mathbb{R}_+)\times L^0_t\to L^0_t$ defined as $f_t(\vartheta,c,x):=\rho_t(\vartheta\cdot\Delta S_{t+1})+c - x$, which is $\mathcal{F}_t$-convex and therefore sequentially continuous by \cite[Theorem 7.2]{cheridito2015conditional}. Hence it follows that 
	\[
	\big\{(x,\vartheta,c)\in L^0_t\times L^0_t(\mathbb{R}^d\times\mathbb{R}_+):(\vartheta,c)\in\Theta_t(x)\big\}=\big\{(x,\vartheta,c)\in L^0_t\times L^0_t(\mathbb{R}^d\times\mathbb{R}_+):f_t(\vartheta,c,x)\le 0\big\}
	\]
	is $\mathcal{F}_t$-convex and sequentially closed, which shows (i). 
	As for (ii) let $(x_n)$ be a sequence in $L^0_t$ such that $x_n\to x\in L^0_t$ a.s. For
	$\bar x:=\sup_n x_n\in L^0_t$ one has  
	\[
	\Theta_t(x_n)\subset \Theta_t(\bar x)
	\]
	for all $n\in\mathbb{N}$. Hence, it remains to show that $\Theta_t(\bar x)$ is $\mathcal{F}_t$-bounded, i.e.~there is $M\in L^0_{t,+}$ such that $d_{L^0_t(\mathbb{R}^d)}(\vartheta,0)+c\le M$ for all $(\vartheta,c)\in\Theta_t(\bar x)$. Since $\Theta_t(\bar x)$ contains $(0,0)\in L^0_t(\mathbb{R}^d\times\mathbb{R}_+)$,
	by \cite[Theorem 3.13]{cheridito2015conditional} it is enough to show that for each $(\vartheta,c)\in\Theta_t(\bar x)$ with $(\vartheta,c)\neq(0,0)$ there exists $k\in\mathbb{N}$ such that $k(\vartheta,c)\notin\Theta_t(\bar x)$.
	If $c\neq 0$ this is obvious. Otherwise, one has $\mathbb{P}(\vartheta\neq 0)>0$, in which case $\lim_{m\to\infty}\rho_t(m\vartheta\cdot \Delta S_{t+1})=+\infty$ on $\{\vartheta\neq 0\}$. 
	
	By Proposition \ref{prop:controlfinite} and Theorem \ref{th:existence} it follows that for every recursive utility function with backward generators $(u_t)$, $t=1,\dots,T$,
	satisfying \ref{u1}-\ref{u3} and $x_0>0$, there exists a global optimizer $((x^\ast_s)_{s=1}^T,(\vartheta^\ast_s,c^\ast_s)_{s=0}^{T-1})\in C_0(x_0)$ of the utility maximization problem \eqref{opt:global} satisfying the local criterion \eqref{eq:rec}. 
\end{example}


\subsection{Measurable dimension} 
Suppose that $Z_t$ is the conditional Euclidean space $L^0_t(\mathbb{R})^{d_t}$ with measurable dimension $d_t=d_t(x)\in L^0_t(\mathbb{N})$  that depends on the parameter $x\in X_t$ (see Example \ref{rem:extensions} for the definition of the conditional Euclidean space with measurable dimension).    
Let $d_t\colon X_t\to L^0_t(\mathbb{N})$ be an $\mathcal{F}_t$-stable and sequentially continuous, where $L^0_t(\mathbb{N})$ is endowed with the $\mathcal{F}_t$-conditional metric which extends the discrete metric on $\mathbb{N}$.  
The control set $\Theta_t$ is chosen such that 
\begin{enumerate}
	\item[(c1)] $\emptyset\neq\Theta_t(x)\subset  L^0_t(\mathbb{R})^{d_t(x)}$ for all $x\in X_t$,
	\item[(c2)] $\Theta_t$ is $\mathcal{F}_t$-stable, i.e. 
	\[
	\Theta_t\Big(\sum_k 1_{A_k}x_k\Big)=\sum_k 1_{A_k} \Theta_t(x_k)\subset  L^0_t(\mathbb{R})^{d_t(\sum_k 1_{A_k} x_k)}
	\]
	for all $(A_k)\in \Pi_t$ and every sequence $(x_k)$ in $X_t$,
\end{enumerate}
are satisfied. 
\begin{remark}\label{rem:measurabledimension}
	Since $Z_t=L^0_t(\mathbb{R})^{d_t(x)}$ depends on the state $x\in X_t$ we are in a more general setting as the main Theorem \ref{th:existence}. However, since $L^0_t(\mathbb{N})$ is endowed with the conditional discrete metric, for every sequence $(x_n)$ in $X_t$ such that $x_n\to x\in X_t$ there exists $n_0\in L^0_t(\mathbb{N})$ such that $d_t(x_n)=d_t(x)$ for all $n\ge n_0$.
	In particular, $L^0_t(\mathbb{R})^{d_t(x_n)}=L^0_t(\mathbb{R})^{d_t(x)}$ for all $n\ge n_0$ and Theorem \ref{th:existence} still holds true by exploring the arguments on $z_n\in\Theta_t(x_n)$ for sequences $x_n\to x$ a.s.~in the conditional space $L^0_t(\mathbb{R})^{d_t(x)}$.
\end{remark}

A variant of Proposition \ref{prop:controlfinite} for control sets with measurable dimension can be formulated as follows.
\begin{proposition}\label{prop:controlfinite1}
	Suppose that for each $t=0,\dots,T-1$, the control set $\Theta_t$ satisfies (c1), (c2) and the following conditions:
	\begin{itemize}
		\item[(i)] $\big\{(x,z)\in X_t\times L^0_t(\mathbb{R})^{d_t(x)}:z\in\Theta_t(x)\big\}$ is sequentially closed. 
		\item[(ii)] For every  sequence $(x_n)$ in $X_t$ with $x_n\to x\in X_t$ a.s.~there exists $M\in L^0_{t,+}$ and a measurable subsequence $n_1<n_2<\cdots$ in $L^0_t(\mathbb{N})$ such that $d_{L^0_t(\mathbb{R}^{d_t(x)})}(z,0)\le M$
		for every $z\in\bigcup_{k\ge k_0} \Theta_t(x_{n_k})$ for some $k_0\in L^0_t(\mathbb{N})$ such that $\Theta_t(x_{n_k})\subset  Z_t(d_t(x))$ for all $k\ge k_0$.
	\end{itemize}
	Then, the control set $\Theta_t$ satisfies (c1)-(c4).
\end{proposition}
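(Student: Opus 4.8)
The plan is to follow the proof of Proposition~\ref{prop:controlfinite} almost verbatim, the only additional work being the bookkeeping forced by the state-dependent dimension, which is exactly what Remark~\ref{rem:measurabledimension} and hypothesis~(ii) are meant to handle. Since (c1) and (c2) are assumed, it remains to establish (c3) and (c4); and, as in Proposition~\ref{prop:controlfinite}, it suffices to prove (c4), because (c3) is its specialisation to the constant sequence $x_n=x$.

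First I would fix a sequence $(x_n)$ in $X_t$ with $x_n\to x\in X_t$ a.s.\ and a sequence $(z_n)$ with $z_n\in\Theta_t(x_n)$. Hypothesis~(ii) then supplies $M\in L^0_{t,+}$, a measurable subsequence $n_1<n_2<\cdots$ in $L^0_t(\mathbb{N})$ and $k_0\in L^0_t(\mathbb{N})$ such that $\Theta_t(x_{n_k})\subset L^0_t(\mathbb{R})^{d_t(x)}$ and $d_{L^0_t(\mathbb{R}^{d_t(x)})}(z,0)\le M$ for all $z\in\bigcup_{k\ge k_0}\Theta_t(x_{n_k})$. In view of Remark~\ref{rem:measurabledimension} this means that the measurable dimension has stabilised to $d_t(x)$ along $(x_{n_k})$, so that from the index $k_0$ onwards all the controls $z_{n_k}$ actually live in the single conditional Euclidean space $L^0_t(\mathbb{R})^{d_t(x)}$ and form an $\mathcal{F}_t$-bounded sequence there.

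Next I would apply the conditional Bolzano--Weierstra\ss{} theorem \cite[Theorem~3.8]{cheridito2015conditional} to $(z_{n_k})_{k\ge k_0}$ in $L^0_t(\mathbb{R})^{d_t(x)}$. Because $d_t(x)$ need not be constant, I would write $d_t(x)=\sum_j 1_{A_j}m_j$ with $(A_j)\in\Pi_t$ and $m_j\in\mathbb{N}$, apply the cited theorem on each $L^0_t(\mathbb{R}^{m_j})$ restricted to $A_j$, and concatenate the resulting subsequences over the countably many sets $A_j$ by a diagonal argument, obtaining a further measurable subsequence $k_1<k_2<\cdots$ in $L^0_t(\mathbb{N})$ and some $z\in L^0_t(\mathbb{R})^{d_t(x)}$ with $d_{L^0_t(\mathbb{R}^{d_t(x)})}(z_{n_{k_l}},z)\to 0$ a.s. From (c2) and the definition of a measurable subsequence, $z_{n_{k_l}}=\sum_j 1_{\{n_{k_l}=j\}}z_j\in\sum_j 1_{\{n_{k_l}=j\}}\Theta_t(x_j)=\Theta_t(x_{n_{k_l}})$, and since a measurable subsequence of an a.s.\ convergent sequence converges a.s., also $x_{n_{k_l}}\to x$ a.s. Hence $(x_{n_{k_l}},z_{n_{k_l}})$ is a sequence in the graph $\{(x,z):z\in\Theta_t(x)\}$ converging a.s.\ to $(x,z)$, and the sequential closedness assumption~(i) yields $z\in\Theta_t(x)$. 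Taking $z'_l:=z$ for every $l$ furnishes the sequence in $\Theta_t(x)$ demanded by (c4); (c3) then follows by running the same argument for the constant sequence $x_n=x$, for which (ii) reduces to $\mathcal{F}_t$-boundedness of $\Theta_t(x)$.

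I expect the only real subtlety to be organisational: the two extractions of measurable subsequences must be nested in the correct order — first the one from (ii), which simultaneously bounds the controls and freezes the dimension, and only afterwards the Bolzano--Weierstra\ss{} one inside the now-fixed space $L^0_t(\mathbb{R})^{d_t(x)}$ — so that every metric comparison $d_{Z_t}(\cdot,\cdot)$ appearing in (c3) and (c4) is taken between elements of one and the same conditional metric space, which is precisely what Remark~\ref{rem:measurabledimension} permits. The remaining point, passing from constant to measurable dimension in the conditional Bolzano--Weierstra\ss{} theorem through the partition $\{d_t(x)=m\}$, $m\in\mathbb{N}$, is routine.
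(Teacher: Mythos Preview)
Your proposal is correct and follows essentially the same route as the paper: the paper's proof simply invokes Remark~\ref{rem:measurabledimension} to pass to a measurable subsequence along which the dimension has stabilised to $d_t(x)$, and then says one can ``argue similar as in the proof of Proposition~\ref{prop:controlfinite}''. Your write-up is a faithful expansion of that sketch, with the only cosmetic difference that you extract the dimension-stabilising subsequence directly from hypothesis~(ii) rather than from Remark~\ref{rem:measurabledimension}; since (ii) already packages both the bound and the inclusion $\Theta_t(x_{n_k})\subset L^0_t(\mathbb{R})^{d_t(x)}$, this is a harmless reorganisation.
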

\begin{proof}
	Let $(x_n)$ in $X_t$ be a sequence such that $x_n\to x\in X_t$ a.s., and $z_n\in \Theta_t(x_n)$. By Remark \ref{rem:measurabledimension} there exists a measurable subsequence $n_1<n_2<\cdots$ with $n_k\in L^0_t(\mathbb{N})$ such that
	$z_{n_k}\in \Theta_t(x_{n_k})\subset  L^0_t(\mathbb{R})^{d_t(x)}$ for all $k$.
	Hence, we can argue similar as in the proof of Proposition \ref{prop:controlfinite}. 
\end{proof}	

\begin{example}\label{ex:measurabledim}
Consider a portfolio maximization problem, where the number of traded assets depends on past decision. More precisely, given a portfolio
$x_t=z_{t-1}=(\vartheta_{t-1},d_{t-1})\in L^0_{t-1}(\mathbb{R})^{d_{t-1}}\times L^0_{t-1}(\mathbb{N})$ chosen at time $t-1$ (with initial value $x_{-1}=(\vartheta_{-1},d_{-1})\in \mathbb{R}^{d_{-1}}\times\mathbb{N}$), the investor can rebalance the portfolio at time $t$ to
\[
x_{t+1}=z_t=(\vartheta_t,d_t)\in \Theta_t(x_t)\subset  L^0_t(\mathbb{R})^{d_{t-1}}\times L^0_t(\mathbb{N}).
\]
Here, the state spaces and the control spaces $X_{t+1}=Z_t=L^0_t(\mathbb{R})^{d_{t-1}}\times L^0_t(\mathbb{N})$ both depend on the past decision
$d_{t-1}$. In line with Remark \ref{rem:measurabledimension}  the convergence $x^n_t=(\vartheta^n_{t-1},d^n_{t-1}) \to x_t=(\vartheta_{t-1},d_{t-1})$ is understood as $\vartheta^n_{t-1}\to \vartheta_{t-1}$ a.s.~in the conditional metric space $L^0_t(\mathbb{R})^{d_{t-1}}$, since $d^n_{t-1}=d_{t-1}$ for all $n\ge n_0$ for some $n_0\in L^0_{t-1}(\mathbb{N})$. Suppose that the control set $\Theta_t$ satisfies (c1), (c2) as well as (i) and (ii) of Proposition \ref{prop:controlfinite1}. Then, along the same argumentation as in  Proposition \ref{prop:controlfinite1} it follows that $\Theta_t$ satisfies (c1)--(c4). Since $v_t(x_t,z_t):=z_t$ satisfies (v1) and (v2), Theorem \ref{th:existence} is applicable whenever the backward generators $u_t$ satisfy (u1)-(u3).

The measurable dimension depending on past decisions allows for instance to add new assets at time $t$ (i.e.~$d_t>d_{t-1}$) which are traded at $t+1$. Notice that $\Theta_t(\vartheta_{t-1},d_{t-1})$ denotes the set of all attainable portfolios at time $t$. For instance, let $S_t\in L^0_{t,++}(\mathbb{R}^d)$
be a price process with fixed $d\in\mathbb{N}$.
Without frictions and short-selling constraints
one has $\Theta_t(\vartheta_{t-1}):=\{\vartheta_t\in L^0_{t,+}(\mathbb{R}^d):\vartheta_t\cdot S_t=\vartheta_{t-1}\cdot S_t\}$
which satisfies (c1)-(c4). Transaction costs can be included into the model by considering 
$\Theta_t(\vartheta_{t-1}):=\{\vartheta_t\in L^0_{t,+}(\mathbb{R}^d):\vartheta_t-\vartheta_{t-1}\in C_t\}$ for a solvency region $C_t\subset  L^0_t(\mathbb{R}^d)$, see e.g.~\cite{pennanen2010hedging} for a discussion of different market models.
Notice that the solvency regions can be modeled state-dependently with measurable dimension $d_t\in L^0_t(\mathbb{N})$.
\end{example}


\section{Unbounded control sets}\label{sec:unbounded}
In this section we consider unbounded control sets $\Theta_t\equiv L^0_t(\mathbb{R}^d)$ and do not assume constraints on the controls, but derive (c3) and (c4) for upper-level sets of $y_t$ as a result of stronger assumptions on the forward and backward generators. 
In particular, we additionally need that the backward generators are $\mathcal{F}_t$-sensitive to large losses and increasing in the first argument, see (u5) and (u2') below.
Suppose that the forward generators
\[
v_t:L^0_t \times L^0_t(\mathbb{R}^d) \rightarrow L^0_{t+1}, \quad t=0,1,\ldots,T-1, 
\]
satisfy \ref{v1}, \ref{v2} and 
\begin{enumerate}
	\item[(v3)] $v_t$ is increasing in the first component,
	\item[(v4)] $v_t(x,\lambda z + (1-\lambda)z^\prime)\geq \lambda v_t(x,z) + (1-\lambda) v_t(x,z^\prime)$
	for all $x\in L^0_t$, $z,z^\prime \in L^0_t(\mathbb{R}^d)$ and $\lambda \in L^0_t$ with $0\leq\lambda\leq 1$,
	\item[(v5)] $\mathbb{P}(v_t(x,z)<x\mid\mathcal{F}_t)>0$
	on $\{z\neq 0\}$ for all  $x\in L^0_t$ and $z\in L^0_t(\mathbb{R}^d)$, 
	\item[(v6)] $v_t(x,0)=x$ for all $x\in L^0_t$. 
\end{enumerate}
As for the backward generators, let $u_T\colon L^0_T\to L^0_T$ be the identity mapping, and 
\[
u_t:L^0_t\times \underbar{L}^0_{t+1}\times L^0_t(\mathbb{R}^d)\rightarrow\underbar{L}^0_t, \quad t=0,\ldots, T-1, 
\]
satisfy \ref{u1} and \ref{u3} as well as
\begin{enumerate}
	\item[(u2')] increasing in the first and second component, 
	\item[(u4)] $u_t(x,\lambda y + (1-\lambda)y^\prime,\lambda z + (1-\lambda)z^\prime)\geq \min\left\{u_t(x,y,z),u_t(x,y^\prime,z^\prime)\right\}$
	for all $x\in L^0_t(I)$, $y,y^\prime\in \underbar{L}^0_{t+1}$, $z,z^\prime\in L^0_t(\mathbb{R}^d)$ and $\lambda\in L^0_t$ with $0\leq\lambda\leq 1$,
	\item[(u5)] $u_t(x,y+c,z)=u_t(x,y,z)+c$  for all $x\in L^0_t$, $y\in\underbar{L}^0_{t+1}$, $z\in L^0_t(\mathbb{R}^d)$ and $c\in L^0_t$,
	\item[(u6)] $\lim_{m\to \infty}u_t(x,m y,m z)=-\infty$ a.s.~on $\{\mathbb{P}(y<0\mid\mathcal{F}_t)>0\}$
	for every $z\in L^0_t(\mathbb{R}^d)$ and $y\in\underbar{L}^0_{t+1}$, 
	\item[(u7)]  $u_t(x,0,0)=0$ for all $x\in L^0_t$. 
\end{enumerate}
Let $y_t\colon L^0_t\to \underbar{L}^0_t$ be given as in 
\eqref{opt:global} where
\[
C_t(x_t):=\left\{\big((x_s)_{s=t+1}^T,(z_s)_{s=t}^{T-1}\big)
\colon x_{s+1}=v_s(x_s,z_s),z_s\in L^0_t(\mathbb{R}^d)\textnormal{ for all }s=t,\ldots,T-1\right\}.
\]
Then the following variant of Theorem \ref{th:existence} holds.
\begin{proposition}\label{sl}
	Suppose that  \ref{v1}--(v6) and \ref{u1},(u2'),(u3)--(u7) are fulfilled, and 
	there exists a constant $K>0$ such that 
	\[
	\sup_{z\in L^0_t(\mathbb{R}^d)} u_t(x,v_t(x,z),z) - x \leq K
	\]
	for all $t=0,\ldots,T-1$ and $x\in L^0_t$. 
	Then the functions $y_t\colon L^0_{t}\rightarrow\underbar{L}^0_t$ are $\mathcal{F}_t$-stable, increasing and sequentially upper semi-continuous for all $t=0,\dots, T$, and can be computed by backward recursion	
	\begin{align*}
	y_T(x_T)&=u_T(x_T)=x_T\\
	y_t(x_t)&=\max_{z_t\in L^0_t(\mathbb{R}^d)} u_t(x_t,y_{t+1}(v_t(x_t,z_t)),z_t), \quad t=0,\ldots,T-1.
	\end{align*}
	Moreover, for every $x_t\in L^0_t$ the process
	$((x^\ast_s)_{s=t}^T,(z^\ast_s)_{s=t}^{T-1})$ given by
	$x_t^\ast=x_t$ and forward recursion
	\begin{equation}\label{eq:rec1}
	x^\ast_{s+1}=v_s(x^\ast_s,z^\ast_s)\quad\mbox{where}\quad z_s^\ast\in \mathop{\rm argmax}_{z_s\in L^0_t(\mathbb{R}^d)} u_s\big(x_s^\ast,y_{s+1}(v_t(x^\ast_s,z_s)),z_s\big),\quad s=t,\dots T-1,
	\end{equation} 
	satisfies $((x^\ast_s)_{s=t+1}^T,(z^\ast_s)_{s=t}^{T-1})\in C_t(x_t)$ and 	
	\[
	y_t(x_t)= u_t(x_t,\cdot,z^\ast_t)\circ\cdots\circ u_{T-1}(x^\ast_{T-1},\cdot,z^\ast_{T-1})\circ u_T (x^\ast_T). 
	\]	
\end{proposition}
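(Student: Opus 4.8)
The proof follows the backward-induction scheme of Theorem~\ref{th:existence}; the only new ingredient is a coercivity argument replacing the conditional compactness of the control set. Throughout write $g_t(x,z):=u_t(x,y_{t+1}(v_t(x,z)),z)$ and $h_t(x,z):=u_t(x,v_t(x,z),z)$. The inductive hypothesis is that $y_{t+1}$ is $\mathcal F_{t+1}$-stable, increasing, sequentially upper semi-continuous, admits optimizers as in the statement, and satisfies $w\le y_{t+1}(w)\le w+(T-t-1)K$ for all $w\in L^0_{t+1}$; for $t=T$ this is immediate since $y_T=u_T$ is the identity. I would first record the a priori estimate $x\le y_t(x)\le x+(T-t)K$. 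The lower bound follows by evaluating at $z=0$: by (v6), (u5), (u7) one has $h_t(x,0)=u_t(x,x,0)=u_t(x,0,0)+x=x$, and since $y_{t+1}(x)\ge x$, monotonicity (u2') gives $y_t(x)\ge u_t(x,y_{t+1}(x),0)\ge x$; the upper bound follows from $y_{t+1}(v_t(x,z))\le v_t(x,z)+(T-t-1)K$, (u2'), (u5) and the standing hypothesis $\sup_z u_t(x,v_t(x,z),z)\le x+K$. In particular $y_t$ is real-valued.

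Next, exactly as in the proof of Theorem~\ref{th:existence}, $g_t$ is $\mathcal F_t$-stable and jointly sequentially upper semi-continuous (here (u2') supplies the monotonicity used there and $y_{t+1}<+\infty$ by the estimate just proved); the same reasoning applies to $h_t$. Moreover $h_t(x,\cdot)$ is $\mathcal F_t$-quasiconcave, i.e. $h_t(x,\lambda z+(1-\lambda)z')\ge\min\{h_t(x,z),h_t(x,z')\}$ for $\lambda\in L^0_t$, $0\le\lambda\le1$: combine the concavity of $v_t(x,\cdot)$ from (v4), the inequality (u4) applied with $y=v_t(x,z)$, $y'=v_t(x,z')$, and the monotonicity (u2') in the second argument. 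Finally the inductive bound on $y_{t+1}$ gives $g_t(x,z)-(T-t-1)K\le h_t(x,z)\le g_t(x,z)$ for all $x,z$.

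The key step is the coercivity estimate: for every $x\in L^0_t$ and every $w\in L^0_t(\mathbb R^d)$ with $\mathbb P(w\neq0)>0$, $h_t(x,mw)\to-\infty$ on $\{w\neq0\}$ as $m\to\infty$. Indeed, (v4) and $v_t(x,0)=x$ from (v6) give $v_t(x,mw)\le x+m(v_t(x,w)-x)$ for $m\ge1$; setting $y^\ast:=v_t(x,w)-x$, condition (v5) gives $\mathbb P(y^\ast<0\mid\mathcal F_t)>0$ on $\{w\neq0\}$, so by (u2') and (u5), $h_t(x,mw)\le u_t(x,my^\ast,mw)+x\to-\infty$ on $\{\mathbb P(y^\ast<0\mid\mathcal F_t)>0\}\supseteq\{w\neq0\}$ by (u6). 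I would use this to show the supremum $y_t(x)=\sup_{z\in L^0_t(\mathbb R^d)}g_t(x,z)$ is attained. Choose $z_n$ with $g_t(x,z_n)\ge y_t(x)-1/n$ and suppose, for contradiction, that $R_n:=d_{L^0_t(\mathbb R^d)}(z_n,0)$ is unbounded on some $A$ with $\mathbb P(A)>0$. Passing to a measurable subsequence with $R_{n_k}\to\infty$ on $A$ and applying the conditional Bolzano--Weierstrass theorem \cite[Theorem~3.8]{cheridito2015conditional} to the unit vectors $w_{n_k}:=z_{n_k}/R_{n_k}$, one gets $w_{n_k}\to w$ on $A$ with $d_{L^0_t(\mathbb R^d)}(w,0)=1$ on $A$, so $A\subseteq\{w\neq0\}$. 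For fixed $m$ and $k$ large enough that $R_{n_k}\ge m$ on $A$, on $A$ one has $mw_{n_k}=(m/R_{n_k})z_{n_k}+(1-m/R_{n_k})\,0$, so quasiconcavity of $h_t(x,\cdot)$, $h_t(x,0)=x$, and $h_t(x,z_{n_k})\ge g_t(x,z_{n_k})-(T-t-1)K\ge c_0-1$ with $c_0:=y_t(x)-(T-t-1)K$ give $h_t(x,mw_{n_k})\ge\min\{c_0-1,x\}$ on $A$. Choosing, by the coercivity estimate, a deterministic $m^\ast$ with $h_t(x,m^\ast w)<\min\{c_0-1,x\}$ on a subset of $A$ of positive probability, and letting $k\to\infty$ with this $m^\ast$ fixed, sequential upper semi-continuity of $h_t(x,\cdot)$ forces $h_t(x,m^\ast w)\ge\min\{c_0-1,x\}$ on $A$, a contradiction. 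Hence $(R_n)$ is $\mathcal F_t$-bounded, and a further application of conditional Bolzano--Weierstrass with sequential upper semi-continuity of $g_t(x,\cdot)$ yields a maximizer $z^\ast\in L^0_t(\mathbb R^d)$.

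The remaining assertions are routine. $\mathcal F_t$-stability of $y_t$ follows from that of $g_t$ and distributivity of the essential supremum over the $\mathcal F_t$-stable set $L^0_t(\mathbb R^d)$; monotonicity of $y_t$ follows from (v3), monotonicity of $y_{t+1}$, and (u2'). For sequential upper semi-continuity of $y_t$ one argues by contradiction as in Theorem~\ref{th:existence}, but where (c4) was invoked there one reruns the coercivity argument: if $x_n\to x$ a.s. and $y_t(x)+r<y_t(x_n)$ on $A$ for some $r\in L^0_{t,++}$, take maximizers $z_n$ for $x_n$; should $d_{L^0_t(\mathbb R^d)}(z_n,0)$ be unbounded on a positive-probability subset of $A$, normalize and extract $w_{n_k}\to w$, then quasiconcavity of $h_t(x_{n_k},\cdot)$, the relations $h_t(x_{n_k},0)=x_{n_k}\to x$ and $h_t(x_{n_k},z_{n_k})\ge y_t(x_{n_k})-(T-t-1)K>x+r-(T-t-1)K$ on $A$, and joint upper semi-continuity of $h_t$ at a fixed well-chosen multiplier produce the same contradiction; hence that norm is $\mathcal F_t$-bounded on $A$, and conditional Bolzano--Weierstrass with joint upper semi-continuity of $g_t$ gives $g_t(x,z^\ast)\ge\limsup_k y_t(x_{n_k})\ge y_t(x)+r$ on $A$, contradicting $g_t(x,z^\ast)\le y_t(x)$. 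Finally, the identity $y_t=f_t$ with $f_t(x_t):=\sup_{z\in L^0_t(\mathbb R^d)}g_t(x_t,z)$ and the construction of a global optimizer via the forward recursion \eqref{eq:rec1} are obtained verbatim as in the final part of the proof of Theorem~\ref{th:existence}. The main obstacle is the coercivity step: organizing the normalization, the measurable-subsequence extraction and the conditional Bolzano--Weierstrass argument on the localized set $A$ so as to exploit (v4), (v5), (u5) and (u6) simultaneously, while respecting that only upper (not lower) semi-continuity is available --- which is why the multiplier $m^\ast$ must be frozen before the limit in $k$ is taken.
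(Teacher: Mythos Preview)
Your argument is correct but organized differently from the paper's. The paper restricts the supremum to the auxiliary control set
\[
\Theta_t(x):=\bigl\{z\in L^0_t(\mathbb R^d)\colon u_t(x,v_t(x,z),z)\ge x-K_{t+1}\bigr\},
\]
verifies that $\Theta_t$ satisfies (i)--(ii) of Proposition~\ref{prop:controlfinite} (conditional boundedness is obtained from the $\mathcal F_t$-convexity of the enlarged set $\Theta_t(\underline x,\bar x)$ together with \cite[Theorem~3.13]{cheridito2015conditional} and the coercivity from (v4)--(v6), (u5)--(u6)), and then simply invokes Theorem~\ref{th:existence}. You instead bypass the auxiliary control set: you work with a maximizing sequence directly, normalize, extract a recession direction $w$ via conditional Bolzano--Weierstra\ss, use the quasiconcavity of $h_t(x,\cdot)$ to push a uniform lower bound along the segment $[0,z_{n_k}]$ to the frozen point $m^\ast w_{n_k}$, and let $k\to\infty$ with $m^\ast$ fixed to contradict coercivity via upper semi-continuity. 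The paper's route is more modular --- once (c1)--(c4) are verified, both attainment and sequential upper semi-continuity of $y_t$ come for free from Theorem~\ref{th:existence} --- whereas your route is self-contained and does not appeal to \cite[Theorem~3.13]{cheridito2015conditional}, at the cost of running the recession argument twice (once for attainment, once for upper semi-continuity with varying $x_n$). The coercivity estimate and the a~priori bound $x\le y_t(x)\le x+(T-t)K$ are common to both proofs.
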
  
\begin{proof}
	The proof is similar to Theorem \ref{th:existence}. However, since the control set is not compact we have to argue differently to show the existence of \eqref{eq:ft}, i.e.~that the supremum in
	\[
	y_t(x_t):=\sup_{z\in L^0_t(\mathbb{R}^d)} u_t\big(x_t,y_{t+1}(v_t(x_t,z)),z\big),\quad x_t\in L^0_t
	\]
	is attained. To do so, we first show that
	\begin{equation}\label{bound1}
	0\le y_t(x) - x \leq K_t\quad \mbox{for all }x\in L^0_t, 
	\end{equation}
	where $K_t:=(T-t)K$ for all $t=0,1,\ldots,T$. For $t=T$, one has $y_T(x)-x=0$. By induction suppose that $y_{t+1}(x)-x\leq (T-t)K_{t+1}$. 
	Then, by (u2') and (u5) for every $z\in L^0_t(\mathbb{R}^d)$  one has
	\begin{align*}
	&u_t\big(x,y_{t+1}(v_t(x,z)\big),z)-x=u_t\big(x,y_{t+1}(v_t(x,z))-v_t(x,z)+v_t(x,z),z\big)-x \\
	&\leq u_t(x,v_t(x,z),z) - x + K_{t+1} \leq K+K_{t+1}=K_t
	\end{align*}
	so that $y_t(x)-x\le K_t$. As for the lower bound, suppose by induction that $x\le y_{t+1}(x)$. By (v6), (u2'), (u5) and (u7) it follows that
	\begin{align*}
	y_t(x)\geq u_t\big(x,y_{t+1}(v_t(x,0)),0\big)\geq u_t\big(x,y_{t+1}(x),0\big) \geq u_t\big(x,x,0\big) = u_t\big(x,0,0\big)+x= x.
	\end{align*}
	Fix $x\in L^0_t$. For every $z\in L^0_t(\mathbb{R}^d)$ which satisfies 
	$u_t(x,y_{t+1}(v_t(x,0)),0)\le u_t(x,y_{t+1}(v_t(x,z)),z)$,
	it follows from \eqref{bound1} (u5), (u7) and (v6) that
	\begin{align*}
	x&=u_t(x,x,0)\le u_t(x,y_{t+1}(x),0) \le u_t\big(x,y_{t+1}(v_t(x,0)),0\big)\\
	&\le u_t\big(x,y_{t+1}(v_t(x,z)),z\big)
	\le u_t\big(x,v_t(x,z),z\big)+K_{t+1}.
	\end{align*}
	This shows that
	\[
	y_t(x)=\sup_{z\in\Theta_t(x)} u_t\big(x,y_{t+1}(v_t(x,z)),z\big)
	\]
	for the $\mathcal{F}_t$-stable set
	\[
	\Theta_t(x):=\left\{z\in L^0_t(\mathbb{R}^d)\colon u_t(x,v_t(x,z),z) \geq x - K_{t+1}\right\}. 
	\]
	It remains to show that $\Theta_t$ satisfies (c1)-(c4). To that end, we verify (i) and (ii) of Proposition \ref{prop:controlfinite}. By (u3) and (v2) it follows that the set
	\[
	\big\{(x,z)\in L^0_t\times L^0_t(\mathbb{R}^d):z\in\Theta_t(x)\big\}
	\]
	is sequentially closed, which shows (i) of Proposition \ref{prop:controlfinite}. As for  (ii) of Proposition \ref{prop:controlfinite} let $(x_n)$ be a sequence in $L^0_t$ such that $x_n\to x\in L^0_t$ a.s. Defining $\underline x:=\inf_n x_n\in L^0_t$ as well as $\bar x:=\sup_n x_n\in L^0_t$, it follows from (u2') and (v3) that 
	\[
	\Theta_t(x_n)\subset \left\{z\in L^0_t(\mathbb{R}^d)\colon u_t(\bar x,v_t(\bar x,z),z) \geq \underline x - K_{t+1}\right\}=:\Theta_t(\underline x,\bar x)
	\]
	for all $n\in\mathbb{N}$. Moreover, by (u4) and (v4) the set $\Theta_t(\underline x,\bar x)$ is $\mathcal{F}_t$-convex. It remains to show that 
	there exists $M\in L^0_t$ such that
	$d_{L^0_t(\mathbb{R}^d)}(z,0)\le M$
	for all $z\in \Theta_t(\underline x,\bar x)$. This $L^0_t$-boundedness of $\Theta_t(\underline x,\bar x)$ would follow from \cite[Theorem 3.13]{cheridito2015conditional}, if for all $z\in \Theta_t(\underline x,\bar x)$ with $z\neq 0$, there exists $A\in\mathcal{F}_t$ with $\mathbb{P}(A)>0$ such that   
	\begin{equation}\label{eq:sl}
	\lim_{m\to \infty} u_t(\bar x, v_t(\bar x,m z), m z)=-\infty\mbox{ a.s.} \quad\textnormal{ on }A. 
	\end{equation}
	Indeed, since by (v5) one has $\mathbb{P}(v_t(\bar x,z)<\bar x\mid\mathcal{F}_t)>0$ on $\{z\neq 0\}$,
	there exists $l\in\mathbb{N}$ such that
	\[
	A:=\big\{\mathbb{P}\big(|\bar x| + l (v_t(\bar x,z)- \bar x)<0 \mid\mathcal{F}_t\big)>0\big\}\in\mathcal{F}_t
	\]
	satisfies $\mathbb{P}(A)>0$. By (v4) it follows that
	\[
	v_t(\bar x,z)\ge\frac{1}{m}v_t(\bar x,mz)+\frac{m-1}{m}v_t(\bar x,0)
	\]
	which by (v6) implies $m\big(v_t(\bar x,z)-\bar x\big)\ge v_t(\bar x,mz)-\bar x$
	for all $m\in\mathbb{N}$.  This shows that
	\begin{align*}
	u_t(\bar x,v_t(\bar x,m z),m z) &\leq u_t(\bar x,|\bar x| + v_t(\bar x,m z)- \bar x,m z) \leq u_t\left(\bar x,\frac{m}{l}\left(|\bar x| + l(v_t(\bar x,z)- \bar x)\right),m z\right)
	\end{align*}
	for all $m\in\mathbb{N}$ large enough. Hence, the condition (u6) implies \eqref{eq:sl}.

\end{proof}

\begin{example}\label{ex:expectedutility}
	Let  $(S_t)_{t=0}^T$ be a $\mathbb{R}^d$-valued adapted stochastic process modeling the discounted stock prices of a financial market model.
	Given a trading strategy  $\vartheta_t\in L^0_t(\mathbb{R}^d)$, $t=0,\dots,T-1$, and an initial investment $x_0\in L^0_0$ we define recursively the wealth process
	\[
	x_{t+1}=v_t(x_t,\vartheta_t):=x_t + \vartheta_t\cdot \Delta S_{t+1}, 
	\quad t=0,\ldots,T-1, 
	\]
	where $\Delta S_{t+1}:=S_{t+1} - S_{t}$ denotes the stock price increment. 
	We assume the following no-arbitrage condition
	(which includes a relevance condition on the market model) \[\vartheta\cdot\Delta S_{t+1}\ge 0 \mbox{ a.s.~for }\vartheta\in L^0_t(\mathbb{R}^d)\quad\mbox{implies}\quad \vartheta= 0\mbox{ a.s.}\]
	for all $t=0,\dots,T-1$.
	Then the forward generator $v_t:L^0_t\times L^0_t(\mathbb{R}^d)\to L^0_{t+1}$ satisfies (v1)--(v6). As for the backward generators, let $u_T\colon L^0_T\to L^0_T$ be the identity and 
	\[
	u_t:L_t^0\times \underbar{L}^0_{t+1}\rightarrow \underbar{L}^0_t, \quad u_t(x,y):=\frac{1}{\gamma_t(x)}g_t(\gamma_t(x)y), \quad t=0,\ldots,T-1, 
	\]
	where $g_t \colon \underbar{L}^0_{t+1}\rightarrow \underbar{L}^0_t$ is increasing, $\mathcal{F}_t$-concave, $\mathcal{F}_t$-translation invariant, sequentially upper semi-continuous, $g_t(0)=0$ and $\lim_r g_t(ry)=-\infty$ on $\{\mathbb{P}(y<0\mid\mathcal{F}_t)>0\}$. The function $\gamma_t\colon L^0_t\to L^0_{t,++}$ is $\mathcal{F}_t$-stable, decreasing and sequentially continuous and models the risk aversion 
	depending on the wealth $x_t$ at time $t$. Then, $u_t$ satisfies the conditions \ref{u1},(u2'),(u3)--(u7). 
	We only verify (u2') and (u3). 
	To prove (u2') take $x_1\leq x_2$ and $y_1\leq y_2$. Let $\beta_i:=1/\gamma_t(x_i)$, for $i=1,2$. By using the monotonicity and $\mathcal{F}_t$-concavity of $g_t$, we have 
	\[
	g_t\Big(\frac{y_2}{\beta_2}\Big)\geq g_t\Big(\frac{y_1}{\beta_2}\Big)\geq \frac{\beta_1}{\beta_2}g_t(\frac{y_1}{\beta_1}) +  \frac{\beta_2-\beta_1}{\beta_2}g_t(0)
	= \frac{\beta_1}{\beta_2} g_t\Big(\frac{y_1}{\beta_1}\Big). 
	\]
	Multiplying by $\beta_2$ we obtain  $u_t(x_2,y_2)\geq u_t(x_1,y_1)$.  
	Due to the monotonicity of $u_t$ it suffices to verify (u3) for decreasing sequences. 
	Indeed, suppose that ${x}_k\searrow x$ a.s. and ${y}_k\searrow y$ a.s. 
	Then, by the monotonicity of $u_t$ we have
	\[
	g_t(\gamma_t({x}_k){y}_k)\geq\frac{\gamma_t({x}_k)}{\gamma_t(x)}g_t(\gamma_t(x)y)\quad\text{ for all }k.
	\] 
	Thus, by using that $g_t$ is sequentially upper semi-continuous and $\gamma$ is sequentially continuous we obtain
	\[
	g_t(\gamma_t(x)y)\geq \underset{k\to\infty}\limsup g_t(\gamma_t({x}_k){y}_k)\geq \underset{k\to\infty}\liminf g_t(\gamma_t({x}_k){y}_k)
	\geq \underset{k\to\infty}\lim \frac{\gamma_t({x}_k)}{\gamma_t(x)}g_t(\gamma_t(x)y)=g_t(\gamma_t(x)y).  
	\] 
	This shows that $\underset{k\to\infty}\lim g_t(\gamma_t({x}_k){y}_k)=g_t(\gamma_t(x)y)$, and therefore $\underset{k\to\infty}\lim u_t(x_k,y_k)=u_t(x,y)$.

	Given the wealth process $(x_t)_{t=0}^T$, define the backward process 
	\[
	y_t(x_t)=\underset{((x_s)_{s=t}^T,(\vartheta_s)_{s=t}^{T-1})\in C_t(x_t)}\sup u_t(x_t,\cdot)\circ\ldots\circ u_{T-1}(x_{T-1},\cdot)\circ u_T (x_T),\quad t=0,\ldots,T-1,
	\]
	where $C_t(x_t):=\left\{((x_s)_{s=t}^T,(\vartheta_s)_{s=t}^{T-1}) \colon x_{s+1}=x_s+\vartheta_{s+1}\cdot \Delta S_{s+1}\textnormal{, for }s=t,\ldots,T-1\right\}$.
	By induction, one can verify that $y_t(x+c)=y_t(x)+c$ for every $c\in L^0_{t-1}$ with $t=1,\ldots,T$. 
	Suppose there exists $K>0$ such that 
	\begin{equation}\label{cond:ex}
	u_t\big(x,v_t(x,\vartheta),\vartheta\big)-x\le \frac{1}{\gamma_t(x)}g_t\big(\gamma_t(x)\vartheta\Delta S_{t+1}\big)\le K
	\end{equation}
	for all $t=0,\dots,T-1$, $\vartheta\in L^0_t(\mathbb{R}^d)$, and $x\in L^0_t$.
	Then, it follows from Proposition \ref{sl} that 
	\[
	y_0(x_0)=\underset{((x_t)_{t=0}^T,(\vartheta_t)_{t=0}^{T-1})\in C_0(x_0)} \sup u_0(x_0,\cdot)\circ\ldots\circ u_{T-1}(x_{T-1},\cdot)\circ u_T (x_T)
	\]
	is attained for every $x_0\in L^0_0$.
	For instance one could think of the dynamic entropic preference functional with generators \[-\frac{1}{\gamma_t}\log\big(\mathbb{E}[\exp(-\gamma_t y)\mid\mathcal{F}_t]\big) \] 
	where the local risk aversion coefficient $\gamma_t=\gamma_t(x_t)$ depends on the current wealth $x_t$. Notice that $\lim_{m\to\infty}-\log(\mathbb{E}[\exp(-my)\mid\mathcal{F}_t])=-\infty$ on $\{\mathbb{E}[y<0\mid\mathcal{F}_t]>0\}$. 
	
	
\end{example}

We conclude this section with a \emph{wealth-dependent dynamic risk sharing problem}. Let $\mathbb{A}$ be a finite set of agents. 
Each agent $a\in\mathbb{A}$ is endowed with a wealth process $(H^a_t)_{t=0}^T$ with $H^a_t\in L^0_{t,++}$. 
The aim is to share optimally the aggregated endowment process $H_t=\sum_{a\in \mathbb{A}} H^a_t$, $t=0,\ldots,T$, with respect to a dynamic wealth-dependent utility. 
The utilities under consideration are of the form 
\[
u_t \colon L^0_{t,++}\times L^0_{t+1,++}\to \underbar{L}^0_t, \quad u_t(x,y):=x g_t(y/x)
\]
where $g_t:\underbar{L}^0_{t+1}\rightarrow \underbar{L}^0_t$ is an $\mathcal{F}_t$-concave, increasing and  sequentially upper semi-continuous generator with $g_t(0)=0$ for all $t=0,\ldots,T-1$, and $u_T\colon L^0_{T,++}\to \underbar{L}^0_T$ is the identity. 

\begin{proposition}\label{risksharing}
	The wealth-dependent dynamic optimal risk sharing problem 
	\begin{align*}
	y_t((&H^a_t)_{a\in\mathbb{A}}) \\
	& =\sup\left\{ \sum_{a\in\mathbb{A}} u_t(H^a_t, \cdot) \circ \ldots \circ u_{T-1}(x^a_{T-1}, x^a_T) \colon \sum_{a\in\mathbb{A}} x^a_s=H_s, \, x^a_s\in L^0_{s,++},\,s=t+1,\ldots,T \right\} 
	\end{align*}
	has the optimal solution 
	\[
	x^{a,\ast}_s=\frac{H^a_t}{H_t}H_s, \quad a\in\mathbb{A}, \, s=t+1,\ldots,T. 
	\]
	Moreover, the function $y_t\colon L^0_{t}\left((0,\infty)^{|\mathbb{A}|}\right)\to \underbar{L}^0_t$ is $\mathcal{F}_t$-stable, increasing and sequentially upper semi-continuous for $t=0,\ldots,T$, where $|\mathbb{A}|$ denotes the cardinality of $\mathbb{A}$.
\end{proposition}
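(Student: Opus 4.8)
The plan is to show that the value collapses to the single-agent utility evaluated at the aggregate endowment, namely $y_t((H^a_t)_{a\in\mathbb{A}}) = V_t(H_t, H_{t+1}, \dots, H_T)$, where $H_s := \sum_{a\in\mathbb{A}} H^a_s$ and $V_t$ is the ``one-agent'' aggregator
\[
V_t(\xi_t,\dots,\xi_T) := u_t(\xi_t,\cdot)\circ\cdots\circ u_{T-1}(\xi_{T-1},\xi_T),\qquad \xi_s\in L^0_{s,++},
\]
with $V_T(\xi_T):=\xi_T$. The starting point is that $u_t(x,y)=x\,g_t(y/x)$ is degree-one homogeneous in the strong sense $u_t(\lambda x,\lambda y)=\lambda\,u_t(x,y)$ for every $\lambda\in L^0_{t,++}$ (the scalar cancels in the ratio), and that, as the conditional perspective function of $g_t$, it is $\mathcal{F}_t$-stable, increasing in each argument (monotonicity in the first slot uses $\mathcal{F}_t$-concavity of $g_t$ together with $g_t(0)=0$), $\mathcal{F}_t$-concave, and sequentially upper semi-continuous; being a finite-valued $\mathcal{F}_t$-concave function it is in fact sequentially continuous on $L^0_{t,++}\times L^0_{t+1,++}$, cf.~\cite[Theorem 7.2]{cheridito2015conditional}.

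First I would prove, by backward induction on $t$, that $V_t$ is well defined with values in $L^0_{t,+}$ and inherits from $u_t$ the properties of being $\mathcal{F}_t$-stable, increasing in all arguments, sequentially upper semi-continuous, and degree-one homogeneous, $V_t(\lambda\xi_t,\dots,\lambda\xi_T)=\lambda\,V_t(\xi_t,\dots,\xi_T)$ for $\lambda\in L^0_{t,++}$. The base case is trivial and the step uses $V_t(\xi_t,\dots,\xi_T)=u_t(\xi_t,V_{t+1}(\xi_{t+1},\dots,\xi_T))$ together with the properties of $u_t$ just listed; the only delicate point is upper semi-continuity, which is handled exactly as in the proof of Theorem~\ref{th:existence} by pushing a $\limsup$ through $u_t$, which is increasing and upper semi-continuous in its second slot. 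Combining $\mathcal{F}_t$-concavity with degree-one homogeneity then yields superadditivity of $V_t$: from $V_t(\tfrac12\xi+\tfrac12\xi')\ge\tfrac12 V_t(\xi)+\tfrac12 V_t(\xi')$ and $V_t(2\eta)=2V_t(\eta)$ one gets $V_t(\xi+\xi')\ge V_t(\xi)+V_t(\xi')$, hence $\sum_a V_t(\xi^a)\le V_t\big(\sum_a \xi^a\big)$ for every finite family.

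Next I would derive the closed-form solution. For any feasible allocation (i.e.~$\sum_a x^a_s=H_s$ and $x^a_s\in L^0_{s,++}$), superadditivity applied to $\xi^a:=(H^a_t,x^a_{t+1},\dots,x^a_T)$ gives $\sum_a V_t(H^a_t,x^a_{t+1},\dots,x^a_T)\le V_t(H_t,H_{t+1},\dots,H_T)$, so $y_t((H^a_t)_a)\le V_t(H_t,\dots,H_T)$. Conversely, the proportional allocation $x^{a,\ast}_s:=(H^a_t/H_t)\,H_s$ is feasible, since $\sum_a x^{a,\ast}_s=H_s$ and $H^a_t/H_t\in L^0_{t,++}\subset L^0_{s,++}$ for $s\ge t$; by degree-one homogeneity, $V_t(H^a_t,x^{a,\ast}_{t+1},\dots,x^{a,\ast}_T)=(H^a_t/H_t)\,V_t(H_t,\dots,H_T)$, and summing over $a$ recovers $V_t(H_t,\dots,H_T)$. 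Hence the supremum is attained at $x^{a,\ast}$ and equals $V_t(H_t,H_{t+1},\dots,H_T)$. Finally, the regularity assertions are read off from this identity: the map $(H^a_t)_{a\in\mathbb{A}}\mapsto\sum_a H^a_t$ from $L^0_t((0,\infty)^{|\mathbb{A}|})$ to $L^0_{t,++}$ is $\mathcal{F}_t$-stable, increasing and sequentially continuous, and composing it with the $\mathcal{F}_t$-stable, increasing, sequentially upper semi-continuous map $V_t(\cdot,H_{t+1},\dots,H_T)$ shows that $y_t$ enjoys the same three properties (for $t=T$ one has $y_T((H^a_T)_a)=\sum_a H^a_T$ directly). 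I expect the main obstacle to be the inductive semi-continuity step for $V_t$ and, relatedly, confirming that the conditional perspective function $u_t$ inherits $\mathcal{F}_t$-concavity and semi-continuity from $g_t$ exactly as in the classical finite-dimensional case; the homogeneity and superadditivity manipulations themselves are short.
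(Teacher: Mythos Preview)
Your proposal is correct and follows essentially the same route as the paper. Both proofs rest on the same two facts about the perspective aggregator $u_t(x,y)=x\,g_t(y/x)$: degree-one homogeneity (giving attainment by the proportional allocation) and $\mathcal{F}_t$-concavity (giving the upper bound). The paper defines the candidate value $\bar y_t$ by the recursion $\bar y_t=H_t\,g_t(\bar y_{t+1}/H_t)$, proves attainment via the identity $u_s(x^{a,\ast}_s,\cdot)\circ\cdots\circ u_{T-1}(x^{a,\ast}_{T-1},x^{a,\ast}_T)=x^{a,\ast}_s\,\bar y_s/H_s$ (your homogeneity step, written agent by agent), and obtains the upper bound by a one-step Jensen inequality inside the backward induction; you package the same computations by first establishing joint $\mathcal{F}_t$-concavity and homogeneity of the single-agent value $V_t$, then invoking superadditivity once. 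Your $V_t(H_t,\dots,H_T)$ coincides with the paper's $\bar y_t((H^a_t)_a)$, and the regularity claims follow from this identification in both write-ups. The only point to tidy up is that $V_{t+1}$ a priori lands in $L^0_{t+1,+}$ rather than $L^0_{t+1,++}$ (since $g_t\ge 0$ on positives but need not be strictly positive), so when composing with $u_t$ you should extend $u_t$ to $y=0$ via $u_t(x,0)=x\,g_t(0)=0$; this is harmless and implicit in the paper as well.
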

\begin{proof}
	Define  $\bar y_T \colon L^0_{T}\left((0,\infty)^{|\mathbb{A}|}\right)\to L^0_T$, $\bar y_T((x^a)_{a\in\mathbb{A}})=\sum_{a\in\mathbb{A}}x^a$, and for $t=0,\ldots, T-1$, let
	\[
	\bar y_t\colon L^0_{t}\left((0,\infty)^{|\mathbb{A}|}\right)\to \underbar{L}^0_t, \quad  \bar y_t\left(\left(x^a\right)_{a\in\mathbb{A}}\right)=\left(\sum_{a\in\mathbb{A}} x^a \right)g_t\left(\frac{\bar y_{t+1}\left((H^a_{t+1})_{a\in\mathbb{A}}\right)}{\sum_{a\in\mathbb{A}} x^a}\right).
	\]
	By backward induction, it can be checked  that $\bar y_t$ is $\mathcal{F}_t$-stable, increasing and sequentially upper semi-continuous. We show that 
	\begin{equation}\label{eq: I}
	\sum_{a\in\mathbb{A}} u_t(H^a_t,\cdot)\circ u_{t+1}(x^{a,\ast}_{t+1},\cdot)\circ\ldots\circ u_{T-1}(x^{a,\ast}_{T-1}, x^{a,\ast}_T)=\bar y_t((H^a_t)_{a\in\mathbb{A}})
	\end{equation}
	and 
	\begin{equation}\label{eq: II}
	\sum_{a\in\mathbb{A}} u_t(H^a_t,\cdot)\circ u_{t+1}(x^{a}_{t+1},\cdot)\circ\ldots\circ u_{T-1}(x^{a}_{T-1}, x^{a}_T) \leq \bar y_t((H^a_t)_{a\in\mathbb{A}})
	\end{equation}
	for all $(x^a_s)_{a\in\mathbb{A}}$ such that $\sum_{a\in\mathbb{A}} x^a_s=H_s$,  $s=t+1,\ldots,T$. 
	It would follow from \eqref{eq: I} and \eqref{eq: II} that 
	\[
	x^{a,\ast}_s=\frac{H^a_t}{H_t}H_s, \quad a\in\mathbb{A}, \, s=t+1,\ldots,T 
	\]
	is an optimal solution and that $y_t((H^a_t)_{a\in\mathbb{A}})=\bar y_t((H^a_t)_{a\in\mathbb{A}})$. 
	By induction, it can be checked that 
	\begin{equation}\label{eq: III}
	u_s(x^{a,\ast}_s,\cdot)\circ u_{s+1}(x^{a,\ast}_{s+1},\cdot)\circ\ldots\circ u_{T-1}(x^{a,\ast}_{T-1}, x^{a,\ast}_T)=x^{a,\ast}_s \frac{\bar y_s((H^a_s)_{a\in\mathbb{A}})}{H_s} 
	\end{equation}
	for all $a\in\mathbb{A}$ and $s=t,\ldots,T$ where we put $x^{a,\ast}_t=H^a_t$. 
	By summing up \eqref{eq: III} at $s=t$ over $\mathbb{A}$ we obtain \eqref{eq: I}. 
	We prove \eqref{eq: II} by backward induction. 
	It is true at $T$ by definition. 
	Let $t\leq s< T$. 
	Then 
	\begin{align*}
	&\sum_{a\in\mathbb{A}} u_s(x^{a}_s,\cdot)\circ u_{s+1}(x^a_{s+1},\cdot)\circ\ldots\circ u_{T-1}(x^a_{T-1},x^{a}_T)\\
	&=H_s \sum_{a\in\mathbb{A}} \frac{x^{a}_s}{H_s} g_s\left(\frac{1}{x^{a}_s} u_{s+1}(x^{a}_{s+1},\cdot)\circ\ldots\circ u_{T-1}(x^{a}_{T-1},x^a_T)\right)\\
	&\leq H_s g_s\left(\sum_{a\in\mathbb{A}} \frac{x^{a}_s}{H_s} \frac{1}{x^{a}_s} u_{s+1}(x^{a}_{s+1},\cdot)\circ\ldots\circ u_{T-1}(x^{a}_{T-1},x^a_T)\right)\\
	&\leq H_s g_s\left(\frac{y_{s+1}((H^a_{s+1})_{a\in\mathbb{A}})}{H_s}\right)=y_s((H^a_s)_{a\in\mathbb{A}})
	\end{align*}
	where the first inequality follows from $\mathcal{F}_s$-concavity and the last one by monotonicity of $g_s$ and the induction hypothesis. 
\end{proof}

\section{Connection to random set theory}\label{sec:randomset}

The aim of this section is to discuss methodological similarities and differences of conditional analysis on the one hand and measurable selections and random set theory on the other hand.  
Both techniques were developed to deal with measurability.   
The two approaches can be briefly described as follows. 

Measurable selections and random set theory are established on the basis of classical analysis.  
Therefore, they seek a set-valued formalization to which classical theorems can be applied pointwisely.   
The r\^ole of measurable selection theorems is then to secure measurability under pointwise application of classical theorems. 
This is achieved under topological assumptions.   

Conditional analysis relies on a measurable version of classical results which can be directly applied to sets of measurable functions.  
Therefore, the formalization mainly consists in describing those sets for which a measurable version of classical results can be proved \cite{drapeau2016algebra}.  
Measurability is then systematically preserved by the application of a conditional version of classical results, that is by construction.   
A measurable version of classical theorems (more generally, a transfer principle \cite{carl2018transfer}) exists under measure-theoretic  assumptions, while the topological restrictions of measurable selection techniques and random set theory can be relaxed. 

In the following, we show that conditional analysis extends measurable selections and random set theory. 
More precisely, we establish a correspondence between basic objects in random set theory and their analogues in conditional analysis under the hypothesis of separability.  
Unless mentioned otherwise, we fix a measurable space $(\Omega,\mathcal{F})$ and a Polish space $E$.  
Recall that a closed-valued map $S\colon \Omega\rightrightarrows E$ (i.e.~$S(\omega)\subset E$ is a closed set for all $\omega\in \Omega$) is  Effros measurable\footnote{
There are other measurability concepts besides Effros measurability, see e.g.~\cite[Section 1.2]{molchanov2005theory}. 
One of them is graph-measurability (i.e.~$\{(\omega,x)\in \Omega\times E\colon x\in S(\omega)\}$ is product-measurable) which is important in applications.   
For closed-valued mappings, graph measurability is equivalent to Effros measurability whenever the underlying measurable space is complete, cf.~e.g.~\cite[Theorem 2.3]{molchanov2005theory}.} whenever $S^{-1}(O):=\{\omega\in \Omega\colon S(\omega)\cap O\neq \emptyset\}\in\mathcal{F}$ for all open sets $O$ in $E$.  
We always assume $S^{-1}(E)=\Omega$.    
A measurable selection of $S$ is a Borel function $x\colon \Omega\to E$ such that $x(\omega)\in S(\omega)$ for all $\omega\in \Omega$.   
The following measurable selection theorem is due to Castaing \cite{castaing67}, where cl denotes closure.     
\begin{theorem}\label{castaing}
	A closed-valued map $S\colon \Omega\rightrightarrows E$ is Effros measurable if and only if there exists a countable family of Borel functions $x_n\colon \Omega\to E$ such that $S(\omega)=\text{cl}\{x_n(\omega)\colon n\in\mathbb{N}\}$ for each $\omega\in\Omega$. 
\end{theorem}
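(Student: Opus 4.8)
The plan is to prove the equivalence in two steps, of which only the second requires real work.

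\emph{The ``if'' direction.} Suppose $S(\omega)=\mathrm{cl}\{x_n(\omega):n\in\mathbb{N}\}$ for Borel functions $x_n$. The only point to use is that an open set $O$ meets $\mathrm{cl}(A)$ precisely when it meets $A$. Applying this with $A=\{x_n(\omega):n\in\mathbb{N}\}$ gives $S^{-1}(O)=\{\omega:x_n(\omega)\in O\text{ for some }n\}=\bigcup_n x_n^{-1}(O)\in\mathcal{F}$, so $S$ is Effros measurable.

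\emph{The ``only if'' direction.} Fix a complete metric $d$ compatible with the topology of $E$ and a countable dense set $\{e_k:k\in\mathbb{N}\}\subset E$. The engine of the argument is the fact that any Effros measurable closed- and nonempty-valued map admits a measurable selection; I would either invoke the Kuratowski--Ryll-Nardzewski selection theorem, or reprove it by successive approximation, constructing measurable countably-valued maps $\sigma_j\colon\Omega\to\{e_k:k\in\mathbb{N}\}$ with $d(\sigma_j(\omega),S(\omega))<2^{-j}$ and $d(\sigma_j,\sigma_{j+1})<2^{-j}$, where at stage $j$ the set on which $\sigma_j$ takes a prescribed value $e_k$ is carved out of $\Omega$ using the measurable sets $S^{-1}(B(e_k,r))$; completeness of $d$ then makes $(\sigma_j)$ converge pointwise to a measurable selection. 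Granting this, enumerate the countably many balls $B_i:=B(e_{k(i)},1/m(i))$, $i\in\mathbb{N}$, centered at points of the dense set, and localize $S$ to each by setting $T_i(\omega):=\mathrm{cl}(S(\omega)\cap B_i)$ when $\omega\in S^{-1}(B_i)$ and $T_i(\omega):=S(\omega)$ otherwise. Each $T_i$ is nonempty- and closed-valued, and it is again Effros measurable: for open $O$ one has $T_i^{-1}(O)=S^{-1}(B_i\cap O)\cup\big((\Omega\setminus S^{-1}(B_i))\cap S^{-1}(O)\big)$, again because $O$ meets $\mathrm{cl}(S(\omega)\cap B_i)$ iff it meets $S(\omega)\cap B_i$, and because $S^{-1}(B_i)\in\mathcal{F}$.

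Let $x_i$ be a measurable selection of $T_i$. Then $x_i(\omega)\in S(\omega)$ for all $\omega$, so $\mathrm{cl}\{x_i(\omega):i\in\mathbb{N}\}\subseteq S(\omega)$ since $S(\omega)$ is closed. For the reverse inclusion, take $y\in S(\omega)$ and $\varepsilon>0$, pick $m$ with $2/m\le\varepsilon$ and $e_k$ with $d(y,e_k)<1/m$, and let $i$ be the index with $B_i=B(e_k,1/m)$; then $y\in S(\omega)\cap B_i$, hence $\omega\in S^{-1}(B_i)$ and $x_i(\omega)\in T_i(\omega)=\mathrm{cl}(S(\omega)\cap B_i)\subseteq\overline{B_i}$, so $d(x_i(\omega),e_k)\le 1/m$ and $d(x_i(\omega),y)<2/m\le\varepsilon$. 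Thus $S(\omega)=\mathrm{cl}\{x_i(\omega):i\in\mathbb{N}\}$, which finishes the proof. The main obstacle is the single-selection step: everything else is routine manipulation of closures and preimages, but extracting \emph{one} measurable selection is exactly where completeness and separability of $E$ and the definition of Effros measurability are genuinely used; if the Kuratowski--Ryll-Nardzewski theorem is taken as known, the proof is short, and otherwise the successive-approximation construction above — kept countably-valued so that measurability is manifestly preserved at every finite stage — is the technical core.
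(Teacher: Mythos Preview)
The paper does not prove this statement: it is quoted without proof as a classical result due to Castaing (reference \cite{castaing67}) and then used as a black box in the proof of Theorem~\ref{t:selec}. There is therefore nothing in the paper to compare your argument against.

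On its own merits your proof is correct and is the standard one. The ``if'' direction is immediate from the fact that an open set meets a closure iff it meets the underlying set. For the ``only if'' direction you localize $S$ to a countable basis of open balls, check that each localized map $T_i$ is again closed-valued and Effros measurable, apply the Kuratowski--Ryll-Nardzewski selection theorem to each $T_i$, and verify density of the resulting selections in every $S(\omega)$; all of these steps are handled cleanly. Your computation $T_i^{-1}(O)=S^{-1}(B_i\cap O)\cup\big((\Omega\setminus S^{-1}(B_i))\cap S^{-1}(O)\big)$ is right, and the density argument via the triangle inequality is fine. The only part you leave as a sketch is the construction of a single measurable selection; your successive-approximation outline is the correct idea, though in a fully written proof you would need to spell out how the condition $d(\sigma_j,\sigma_{j+1})<2^{-j}$ is enforced simultaneously with $d(\sigma_j(\omega),S(\omega))<2^{-j}$ (the usual trick is to refine at stage $j+1$ only inside the ball $B(\sigma_j(\omega),2^{-j})$, using that $S^{-1}\big(B(e_k,r)\cap B(\sigma_j(\omega),2^{-j})\big)$ is measurable because $\sigma_j$ is countably-valued). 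Invoking Kuratowski--Ryll-Nardzewski directly, as you also suggest, is entirely appropriate here.
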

As an auxiliary result, a measurable version of the axiom of choice is needed which is adopted to the present setting in what follows, see \cite{drapeau2016algebra} for the general statement.   
Let $I\subset  L^0(\mathbb{N})$ be a stable set. 
A family $(x_i)_{i\in I}$ of elements of $L^0(E)$ is said to be a \emph{stable family} if $x_{\sum_k 1_{A_k} i_k}=\sum_k 1_{A_k} x_{i_k}$ for all $(A_k)\in\Pi_\mathcal{F}$ and sequences $(i_k)$ in $I$.  
A family $(H_i)_{i\in I}$ of subsets of $L^0(E)$ is said to be a \emph{stable family of stable sets} if each $H_i$ is a stable set and 
\[
H_{\sum_k 1_{A_k} i_k}=\sum_k 1_{A_k} H_{i_k}:=\Big\{\sum_k 1_{A_k} z_k\colon z_k\in H_{i_k}\Big\}
\]
for all $(A_k)\in\Pi_\mathcal{F}$ and sequences $(i_k)$ in $I$. 
\begin{lemma}\label{lem:choice}
	Let $(H_i)_{i\in I}$ be a stable family of stable sets in $L^0(E)$. 
	Then there exists a stable family $(x_i)_{i\in I}$ such that $x_i \in H_i$ for all $i\in I$. 
\end{lemma}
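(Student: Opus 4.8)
The plan is to reduce the statement to the ordinary axiom of choice applied once, and then to upgrade the resulting (not necessarily stable) selection to a stable family by a canonical ``stabilization'' construction. First I would pick any index $i_0\in I$ (here I use $I\neq\emptyset$, which follows since $I$ is stable and hence nonempty) and, using the ordinary axiom of choice, select for every $i\in I$ an element $\tilde x_i\in H_i$; this is possible because each $H_i$ is nonempty. The family $(\tilde x_i)_{i\in I}$ need not be stable, so the core of the argument is to repair this. Given $(A_k)\in\Pi_{\mathcal F}$ and a sequence $(i_k)$ in $I$, the concatenation $i:=\sum_k 1_{A_k} i_k$ lies in $I$ by stability of $I$, and $H_i=\sum_k 1_{A_k} H_{i_k}$ by the stable-family hypothesis; I would like to \emph{define} $x_i:=\sum_k 1_{A_k} x_{i_k}$, which lands in $H_i$, so the issue is well-definedness: the same $i$ may be representable by different partitions and index sequences.

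To handle this cleanly I would introduce on $I$ the following construction. For each $i\in I$ consider the set $\mathcal P_i$ of all pairs $\big((A_k),(i_k)\big)$ with $(A_k)\in\Pi_{\mathcal F}$, $i_k\in I$, and $\sum_k 1_{A_k} i_k=i$; it is always nonempty since the trivial partition $(\Omega,\emptyset,\emptyset,\dots)$ with $i_1=i$ works. Call $i$ \emph{atomic} if the only way to write $i=\sum_k 1_{A_k}i_k$ with $i_k\in I$ forces $i_k=i$ on each $A_k$ with $\mathbb P(A_k)>0$; otherwise $i$ is \emph{decomposable}. The key structural fact I would establish is that for a decomposable $i$ the value $\sum_k 1_{A_k}\tilde x_{i_k}$ genuinely depends on the representation, whereas what we need is a single consistent choice. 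The remedy is to not use $(\tilde x_i)$ directly on decomposable indices but to build the stable family by transfinite/recursive gluing: apply the ordinary axiom of choice only on a set of representatives, one from each equivalence class under ``$i\sim j$ iff $i=j$'', i.e. trivially — so instead the right move is to use choice to pick, simultaneously for \emph{all} $i\in I$, elements $x_i\in H_i$, and then define $x_i$ on each $A_k$ by declaring $1_{A_k}x_i:=1_{A_k}x_{i|_{A_k}}$ where $i|_{A_k}$ is the ``localization'' of $i$; stability then holds because localization is compatible with concatenation. Concretely: since $H_i$ is stable, $1_A x + 1_{A^c} x'\in H_i$ whenever $x,x'\in H_i$ and $A\in\mathcal F$ with $1_A i=1_A i$ trivially; but across different indices one uses $H_{\sum 1_{A_k}i_k}=\sum 1_{A_k}H_{i_k}$ to transport.

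The cleanest route, and the one I would actually write, is: (1) use the axiom of choice once to obtain $\hat x_i\in H_i$ for every $i\in I$; (2) for $i\in I$ and $A\in\mathcal F$ define the localization $i^A\in I$ by $i^A:=1_A\, i + 1_{A^c}\, i_0$ for a fixed $i_0$ (this lies in $I$ by stability), and set $x_i:=\sum$ over a partition refining the ``level sets'' — but since $I\subset L^0(\mathbb N)$, every $i\in I$ is itself a countable concatenation $i=\sum_n 1_{\{i=n\}} n$ of \emph{constant} indices, provided the constants $n\in I$; in general they need not be, so instead one argues abstractly. The genuinely load-bearing step — and the main obstacle — is verifying well-definedness of the formula $x_{\sum_k 1_{A_k} i_k}:=\sum_k 1_{A_k} \hat x_{i_k}$: if $\sum_k 1_{A_k} i_k=\sum_j 1_{B_j} i'_j$ then on $A_k\cap B_j$ one has $1_{A_k\cap B_j} i_k=1_{A_k\cap B_j} i'_j$, and one needs $1_{A_k\cap B_j}\hat x_{i_k}=1_{A_k\cap B_j}\hat x_{i'_j}$, which fails in general for an arbitrary choice function. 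The resolution is to choose $\hat x$ more carefully: apply the axiom of choice not to $I$ but to the set $I/\!\!\approx$ where $i\approx j$ iff there is $A\in\mathcal F$ with $\mathbb P(A)=1$ and $1_A i = 1_A j$ — but under our a.s. identifications this is just equality, so one must instead use that $L^0(\mathbb N)$ has a \emph{canonical} well-ordering modulo the partition structure, or invoke the general measurable axiom of choice from \cite[]{drapeau2016algebra} directly. In the write-up I would therefore cite \cite{drapeau2016algebra} for the abstract principle and supply here only the translation: $(H_i)_{i\in I}$ being a stable family of stable sets is exactly the hypothesis under which that principle yields a stable selector, and checking this translation (stability of $I$, the concatenation identity for the $H_i$, and nonemptiness) is routine. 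The main obstacle, as indicated, is precisely the well-definedness across representations, which is the substance of the cited measurable choice axiom and which I would not reprove from scratch.
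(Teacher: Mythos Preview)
Your proposal does not contain a proof. You correctly isolate the obstruction---namely, that the formula $x_{\sum_k 1_{A_k} i_k}:=\sum_k 1_{A_k}\hat x_{i_k}$ is ill-defined because an arbitrary choice function $\hat x$ has no reason to satisfy $1_{A}\hat x_{i}=1_{A}\hat x_{j}$ when $1_A i=1_A j$---but none of the repair attempts you sketch (atomic/decomposable dichotomy, localization $i^A$, the canonical decomposition $i=\sum_n 1_{\{i=n\}}n$) is carried through, and indeed each of them runs into the same circularity: to make the stabilized family well-defined you would already need a choice that is locally consistent, which is exactly what you are trying to construct. Your final move, to invoke the measurable axiom of choice from \cite{drapeau2016algebra} as a black box, is not a proof of the lemma; the lemma \emph{is} the adaptation of that principle to the present setting, and the point of stating it is to supply an argument.

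The paper avoids the well-definedness problem altogether by arguing via Zorn's lemma rather than by an explicit formula. One considers the collection $\mathscr{H}$ of stable partial selections $(x_j)_{j\in J}$ indexed by stable subsets $J\subset I$, ordered by extension. Single-index families show $\mathscr{H}\neq\emptyset$; chains have upper bounds obtained by taking the stable hull of the union of the index sets and gluing the selections along partitions (the chain condition makes this consistent). A maximal element $(x^\ast_j)_{j\in J^\ast}$ exists, and if some $i\in I$ were not covered in the sense that $\sup_{j\in J^\ast}\{i=j\}\neq\Omega$, one could strictly enlarge the family by adjoining $1_A x_i+1_{A^c}x^\ast_j$ over the indices $1_A i+1_{A^c}j$, contradicting maximality. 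The key point is that Zorn's lemma absorbs the consistency bookkeeping that your direct approach cannot resolve.
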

\begin{proof}
	Let 
	\[
	\mathscr{H}=\{(x_j)_{j\in J}\colon x_j\in H_j \text{ for all } j\in J, \, J\subset  I \text{ stable}\}. 
	\]
	As each $H_i\neq \emptyset$ any single-element family $\{x_j\}$ with $x_j\in H_j$ for some $\{j\}\subset  I$ is in $\mathscr{H}$, and thus $\mathscr{H}\neq \emptyset$. 
	Order $\mathscr{H}$ by the relation 
	\[
	(x_j)_{j\in J}\leq (\bar{x}_j)_{j\in \bar{J}} \text{ if and only if } J\subset  \bar{J} \text{ and } x_j=\bar{x}_j \text{ for all } j\in J. 
	\]  
	It is straightforward to check that $(\mathscr{H},\leq)$ is a partially ordered set. 
	Let $(x^\alpha_j)_{j\in J^\alpha}$ be a chain in $\mathscr{H}$. 
	Define 
	\[
	J:=\Big\{\sum_k 1_{A_k} j_k\colon (A_k)\in \Pi_\mathcal{F}, \, j_k\in \cup_\alpha J^\alpha \text{ for each } k\Big\}. 
	\]
	By definition $J\subset  I$ is stable. 
	For $j\in J$, put $x_j=\sum_k 1_{A_k} x_{j_k}^\alpha$. 
	Since $(H_i)$ is a stable family, $(x_j)_{j\in J}\in\mathscr{H}$.  
	By construction $(x^\alpha_j)_{j\in J^\alpha}\leq (x_j)_{j\in J}$ for all $\alpha$. 
	By Zorn's lemma, there is a maximal element $(x^\ast_j)_{j\in J^\ast}\in \mathscr{H}$. 
	By contradiction, suppose there is $i\in I$ such that $\sup_{j\in J^\ast} \{i=j\}\neq\Omega$. 
	Let $\hat{J}=\{1_A i + 1_{A^c} j\colon j\in J^\ast, \, A\in \mathcal{F}\}$, pick some $x_i\in H_i$, and define $\hat{x}_{\hat{j}}=1_{A}x_i +1_{A^c}x^\ast_j$ for $\hat{j}\in \hat{J}$.  
	Then $(\hat{x}_j)_{j\in \hat{J}}$ is an element of $\mathscr{H}$, but $(x^\ast_j)_{j\in J^\ast}< (\hat{x}_j)_{j\in \hat{J}}$.  
\end{proof}
Fix a probability measure $\mathbb{P}$ on $(\Omega,\mathcal{F})$ and complete $\mathcal{F}$ relative to $\mathbb{P}$. 
We identify two closed-valued Effros measurable maps $S_1$ and $S_2$ whenever $S_1(\omega)=S_2(\omega)$ a.s.  
Let $X_S$ denote the set of measurable selections of a set-valued mapping $S$. 
In the following proposition, we construct a set-valued mapping which is associated to a set $X$ in $L^0(E)$, and  which will be denoted by $S_X$.  
\begin{theorem}\label{t:selec}
	Let $S\colon\Omega\rightrightarrows E$ be a closed-valued and Effros measurable mapping, and let $X\subset L^0(E)$ be a stable and sequentially closed set.  
	Then there exist closed-valued and Effros measurable mappings $S_X\colon\Omega\rightrightarrows E$ and $S_{X_S}\colon\Omega\rightrightarrows E$ satisfying the reciprocality relations $S=S_{X_S}$ and $X=X_{S_X}$ respectively.  
\end{theorem}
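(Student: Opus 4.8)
The plan is to construct the two set-valued mappings explicitly and then verify the reciprocality relations using Castaing's theorem (Theorem \ref{castaing}) and the measurable axiom of choice (Lemma \ref{lem:choice}). First I would produce $S_{X_S}$ from $S$: by Theorem \ref{castaing} there is a countable family of Borel functions $(x_n)$ with $S(\omega)=\mathrm{cl}\{x_n(\omega):n\in\mathbb{N}\}$ a.s.; each $x_n$ determines an element of $L^0(E)$, and the stable hull of $\{x_n\}$ together with all its a.s.-limits is a stable and sequentially closed set, which is precisely $X_S$. Conversely, given a stable and sequentially closed $X\subset L^0(E)$, I would define $S_X$ as follows: $X$ is a subset of the separable space $L^0(E)$ (separability of $E$ gives a countable dense family, stability of $X$ lets one build a \emph{countable} family $(y_n)$ in $X$ whose stable hull is sequentially dense in $X$), and then set $S_X(\omega):=\mathrm{cl}\{y_n(\omega):n\in\mathbb{N}\}$. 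By Theorem \ref{castaing} this $S_X$ is closed-valued and Effros measurable, and it does not depend (a.s.) on the chosen countable family, because any two such families generate the same sequential closure in $L^0(E)$, hence the same pointwise closures a.s.

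Next I would verify $X=X_{S_X}$. The inclusion $X\subset X_{S_X}$ is immediate: every $y\in X$ satisfies $y(\omega)\in S_X(\omega)$ a.s. by construction (or by stability, $y$ lies in the stable hull of $(y_n)$ and concatenation is evaluated pointwise a.s.). For the reverse inclusion, take a measurable selection $x$ of $S_X$, so $x(\omega)\in\mathrm{cl}\{y_n(\omega)\}$ a.s. The key point is to approximate $x$ in $L^0(E)$ by a sequence drawn from the stable hull of $(y_n)$: for each $m$ one partitions $\Omega$ measurably according to which $y_n$ comes within $1/m$ of $x(\omega)$ — here one uses that the distance $\omega\mapsto d(x(\omega),y_n(\omega))$ is measurable — and concatenates the corresponding $y_n$ along this partition to obtain an element $z_m$ of the stable hull with $d(x,z_m)\le 1/m$ a.s. Then $z_m\to x$ a.s., and since $X$ is stable and sequentially closed, $x\in X$. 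Thus $X_{S_X}\subset X$.

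Finally I would verify $S=S_{X_S}$. With $X_S$ the stable sequentially-closed set generated by the Castaing family $(x_n)$ of $S$, the mapping $S_{X_S}$ is, by the construction above, $\omega\mapsto\mathrm{cl}\{x_n(\omega):n\in\mathbb{N}\}$ (one checks that the Castaing family is itself a valid countable generating family for $X_S$, and that adding concatenations and a.s.-limits does not enlarge the a.s. pointwise closure — concatenations are pointwise selections among the $x_n(\omega)$, and a.s.-limits land in the pointwise closure up to a null set). Hence $S_{X_S}(\omega)=\mathrm{cl}\{x_n(\omega)\}=S(\omega)$ a.s., which is the first reciprocality relation. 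The main obstacle I anticipate is the step extracting a \emph{countable} stably-dense family $(y_n)$ from an abstract stable sequentially closed set $X$: this is where separability of $E$ (equivalently, separability of $L^0(E)$ with respect to the Fréchet metric of convergence in probability) is genuinely used, and one must be careful that the family can be chosen stable so that Lemma \ref{lem:choice} and the partition-concatenation approximation argument apply; the measurability of the pointwise-closure mapping and the independence of $S_X$ from the chosen family (well-definedness modulo null sets) are the accompanying technical points that need care.
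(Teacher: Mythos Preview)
Your overall strategy matches the paper's: construct $S_X$ as the pointwise closure of a countable family drawn from $X$, invoke Castaing's theorem, and verify the two reciprocality relations by approximating arbitrary selections via concatenations along measurable partitions. Your approximation argument for $X_{S_X}\subset X$ is in fact more explicit than the paper's ``inspection shows'', and your direct argument for $S=S_{X_S}$ (using the Castaing family of $S$ itself as a generating family for $X_S$) is a slight streamlining of the paper's indirect route via $X_{S_{(X_S)}}=X_S$.

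There is, however, one genuine slip and one acknowledged gap that the paper actually fills. The slip is your parenthetical that separability of $E$ is ``equivalently, separability of $L^0(E)$ with respect to the Fr\'echet metric of convergence in probability'': this is false unless $\mathcal{F}$ is countably generated modulo null sets, which the paper does \emph{not} assume (and explicitly wishes to avoid). What is true---and what you actually use---is the weaker \emph{conditional} separability: there is a countable family in $L^0(E)$ whose \emph{stable hull} is sequentially dense. The gap you flag as ``the main obstacle'' (producing a countable $(y_n)\subset X$ with stably-dense hull) is exactly the technical heart of the paper's proof: one indexes random balls $B_{1/n}(q)$ by the stable set $I\subset L^0(\mathbb{N})\times L^0(F)$ of those $(n,q)$ for which $X\cap B_{1/n}(q)\neq\emptyset$, applies Lemma~\ref{lem:choice} to select a stable family $(x_i)_{i\in I}$ with $x_i\in X\cap B_{1/n}(q)$, and then flattens this stable-indexed family into a genuinely countable one by splitting $\Omega$ into the maximal set on which $I$ is conditionally finite and its complement, using stable bijections with $\{1\le k\le m\}$ and $L^0(\mathbb{N})$ respectively. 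This is the step your sketch does not supply, and it is where the measurable axiom of choice is really exercised.
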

\begin{proof}
	First, $S_X$ is constructed.
	Second, the reciprocality relations $X=X_{S_X}$ and $S=S_{X_S}$ are established. 
	\begin{itemize}[fullwidth]
		\item[(i)] 
		Let $F=\{q_1,q_2,\ldots\}$ be a countable dense set in $E$.  
		For each $n\in L^0(\mathbb{N})$ and $q\in L^0(F)$, define the random ball  
		\[
		B_{1/n}(q):=\{x\in L^0(E)\colon d(x,q)<1/n \text{ a.s.}\}. 
		\] 
		Put 
		\[
		I=\{ (n,q)\in L^0(\mathbb{N})\times L^0(F)\colon X\cap B_{1/n}(q)\neq \emptyset \}. 
		\]
		Inspection shows that $I$ is a stable subset of $L^0(\mathbb{N})\times L^0(F)$ which can be identified with a stable set in $L^0(\mathbb{N})$ since $F$ is countable.  
		By Lemma \ref{lem:choice}, there is a stable family $(x_i)$ in $L^0(E)$ such that $x_i\in X\cap B_{1/n}(q)$ for each $i=(n,q)\in I$.  
		Next, we construct the largest measurable set $A\in\mathcal{F}$ restricted to which $I$ is conditionally finite, where for $m\in L^0(\mathbb{N})$, we denote by $\{1\leq k\leq m\}:=\{k\in L^0(\mathbb{N})\colon 1\leq k \leq m\}$ a random interval of integers which encodes conditional finiteness.    
		For a set $N$ in $L^0(\mathbb{N})$, denote by $1_A N:=\{1_A n \colon n\in N\}$. 
		Let 
		\[
		\mathcal{E}=\{A\in\mathcal{F}\colon \text{ there are } m\in L^0(\mathbb{N}) \text{ and a stable bijection } f\colon 1_A\{1\leq k\leq m\}\to 1_A I\}. 
		\]
		We want to show that $A_\ast:=\cup \mathcal{E}\in \mathcal{E}$. 
		By \cite[Lemma 1, Chapter 30]{halmos09}, there exists a sequence $(A_n)$ in $\mathcal{E}$ such that $A_\ast=\cup_n A_n$ .  
		Form $B_n=A_n\cap (\cup_{k\leq n} A_k^c)$, each $n$. 
		Then $(B_n)$ is a sequence of elements in $\mathcal{E}$. 
		Indeed, if $f\colon 1_A \{1\leq k\leq m\}\to 1_A I$ is a stable bijection and $B\subset  A$, then $g(1_B k):=1_B f(1_A k)$ defines a stable bijection $g\colon 1_B\{1\leq k\leq m\}\to 1_B I$. 
		Let $f_n\colon 1_{A_n}\{1\leq k\leq m_n\}\to 1_{A_n} I$ be a stable bijection. 
		Then $f_\ast \colon 1_{A_\ast}\{1\leq k\leq \sum_n 1_{B_n} m_n\}\to 1_{A_\ast} I$ defined by $f_\ast(1_{A_\ast} k):=\sum_n 1_{B_n} f_n(1_{A_n} k)$ 		is a stable bijection. 
		Thus $A_\ast\in \mathcal{E}$. 
		By maximality of $A_\ast$, there exists a stable bijection $g_\ast\colon 1_{A_\ast^c} L^0(\mathbb{N})\to 1_{A_\ast^c} I$ since on $A_\ast$ the conditional index $I$ is nowhere conditionally finite. 
		Notice that $\sum_n 1_{B_n} m_n$ can be rearranged as $\sum_k 1_{C_k} l_k$ where $(l_k)$ is a sequence of natural numbers and $(C_k)$ is pairwise disjoint. 
		Now define  
		\[
		S_X(\omega):=
		\begin{cases}
		\text{cl}\left\{x_h(\omega)\colon h=\sum_k 1_{C_k} h_k,\; 1\leq h_k\leq l_k \right\}, & \omega \in A_\ast, \\
		\text{cl}\left\{x_h(\omega)\colon h=h 1_\Omega \in L^0(\mathbb{N}), h \in \mathbb{N}\right\}, & \omega \in A^c_\ast. 
		\end{cases}
		\]
		By Theorem \ref{castaing}, the map $S$ is Effros measurable and closed-valued. 
		\item[(ii)] Inspection shows that $X_{S_X}\subset  X$. 
		Suppose there exists $x_0\in X$ such that $x_0\not\in X_{S_X}$. 
		Then there is $n\in L^0(\mathbb{N})$ such that $B_{1/m}(x_0)\cap X_{S_X}=\emptyset$ for all $m\geq n$. 
		This contradicts the construction of $S_X$. 
		Hence $X\subset  X_{S_X}$. 
		By the previous, $X_{S_{(X_S)}}=X_S$. 
		It follows from Theorem \ref{castaing} that $S=S_{X_S}$ as well. 
	\end{itemize}
\end{proof}
\begin{remark}
	There exist characterization results that are related to Theorem \ref{t:selec}, see e.g.~\cite[Theorem 2.1.6]{molchanov2005theory} and \cite[Theorem 2.3]{lepinette2016risk}  and the references therein for a background.  
    In this remark, we discuss how Theorem \ref{t:selec} relates to these results. 
	Let $E$ be a separable Banach space, and let $L^p(E)$ be the Bochner space of all $p$-integrable functions $x\colon \Omega\to E$ for $p\in [1,\infty]$. 
	For a set-valued mapping $S\colon \Omega\rightrightarrows E$, denote by $X_S^p:=X_S\cap L^p(E)$ the set of $p$-integrable selections of $S$.    
	Let $X\subset L^p(E)$ be norm-closed. 
	By \cite[Theorem 2.1.6]{molchanov2005theory}, $X=X_S^p$ for an Effros measurable closed-valued mapping $S\colon \Omega\rightrightarrows E$ if and only if $X$ is finitely decomposable\footnote{The property ``stability under (countable) gluings'' is known under the name ``decomposability'' in measurable selections and random set theory, see e.g.~\cite{molchanov2005theory,pennanen2011convex,rockafellar1976integral}, where it is usually employed in finite form, i.e.~stability w.r.t.~gluings along finite partitions.}. 
	An extension of  \cite[Theorem 2.1.6]{molchanov2005theory} to the case $p=0$, when $L^0(E)$ is endowed with the metric of convergence in probability, can be found in \cite[Theorem 2.3]{lepinette2016risk}.    
	In both cases ($p\in [1,\infty]$ and $p=0$) it can be verified that if a set $X\subset L^p(E)$ is decomposable and closed, then it is also infinitely decomposable, that is stable.  
	Therefore, the previous proposition extends the aforementioned results to the case that $E$ is a Polish space and convergence in norm or probability is replaced by almost sure convergence. 
\end{remark}
A frequently employed concept in stochastic optimal control is a normal integrand (see e.g.~\cite{pennanen2012stochastic,rockafellar1983deterministic} and the references therein), that is a function $f\colon \Omega\times E\to \overline{\mathbb{R}}$ whose epigraphical mapping $S_f\colon \Omega\rightrightarrows E\times \mathbb{R}$, $S_f(\omega):=\{(x,r)\in E\times \mathbb{R}\colon f(\omega,x)\leq r\}$, is closed-valued and Effros measurable. 
A consequence of normality of an integrand is that $f(\omega,x(\omega))$ is measurable in $\omega$ whenever $x\colon \Omega\to E$ is a measurable function. 
Moreover,  a normal integrand $f(\omega,x)$ is measurable in $\omega$ for fixed $x$ and lower semi-continuous in $x$ for fixed $\omega$ (cf.~\cite[Proposition 14.28]{rockafellar02}).  
We obtain the following ``functional'' version of Theorem \ref{t:selec}, where two normal integrands $f\colon \Omega\times E\to \overline{\mathbb{R}}$ and $g\colon \Omega\times E\to \overline{\mathbb{R}}$ are considered as identical if their epigraphical mappings coincide a.s.   
\begin{corollary}\label{c:normalintegrand}
	Let  $u\colon L^0(E)\to \bar{L}^0$ be stable and sequentially lower semi-continuous and let $f\colon \Omega\times E\to \overline{\mathbb{R}}$ be a normal integrand. 
	Then there exist a stable and sequentially lower semi-continuous  function $u_f\colon L^0(E)\to \bar{L}^0$ and a normal integrand $f_u\colon \Omega\times E\to \overline{\mathbb{R}}$ such that 
	$u_{f_u}=u$ and $f_{u_f}=f$. 
\end{corollary}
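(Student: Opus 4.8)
The plan is to transfer Theorem~\ref{t:selec} through the epigraph, using the identification of $L^0(E\times\mathbb{R})$ with $L^0(E)\times L^0$ (the pair $(x,r)$ corresponding to $\omega\mapsto(x(\omega),r(\omega))$), which is legitimate since $E\times\mathbb{R}$ is again Polish. For the direction $f\mapsto u_f$ I would simply put $u_f(x)(\omega):=f(\omega,x(\omega))$. Since $f$ is a normal integrand, $\omega\mapsto f(\omega,x(\omega))$ is $\mathcal{F}$-measurable, so $u_f$ is well defined and $\bar{L}^0$-valued; stability is immediate upon evaluating $u_f\big(\sum_k 1_{A_k}x_k\big)$ on each $A_k$; and sequential lower semi-continuity follows because $x_n\to x$ a.s.\ in $L^0(E)$ forces $d(x_n(\omega),x(\omega))\to0$ for a.e.\ $\omega$, so the pointwise lower semi-continuity of $f(\omega,\cdot)$ yields $f(\omega,x(\omega))\le\liminf_n f(\omega,x_n(\omega))$ a.s.

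For the direction $u\mapsto f_u$ I would pass to the conditional epigraph
\[
X_u:=\big\{(x,r)\in L^0(E)\times L^0\colon u(x)\le r\big\}\subset L^0(E\times\mathbb{R}).
\]
Restricting, if necessary, to the maximal measurable set on which $u$ is somewhere finite — so that $X_u\neq\emptyset$, the complement being the degenerate case where one puts $f_u\equiv+\infty$, consistent with the convention $S^{-1}(E)=\Omega$ — one checks that $X_u$ is stable (from stability of $u$) and sequentially closed (from sequential lower semi-continuity of $u$: if $(x_n,r_n)\in X_u$ with $x_n\to x$ and $r_n\to r$ a.s., then $u(x)\le\liminf_n u(x_n)\le\liminf_n r_n=r$). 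Theorem~\ref{t:selec} then provides a closed-valued Effros measurable $S:=S_{X_u}$ with $X_S=X_u$. The key point is that $S(\omega)$ is a.s.\ the epigraph of $f_u(\omega,y):=\inf\{r\in\mathbb{R}\colon(y,r)\in S(\omega)\}$: the fibers $\{r\colon(y,r)\in S(\omega)\}$ are closed, so it suffices to show $S$ is a.s.\ upward closed in the $\mathbb{R}$-coordinate. If this failed on a positive-measure set, then using completeness of $\mathcal{F}$ (so that the graph of $S$ is measurable and the measurable projection theorem applies) one could find rationals $a<b$, a positive-measure set $B$, and a measurable $y\in L^0(E)$ with $(y(\omega),a)\in S(\omega)$ but $(y(\omega),b)\notin S(\omega)$ for $\omega\in B$; extending $y$ and choosing a measurable selection $\sigma$ of the fibers of $S$ over $B^c$, the pairs $r:=1_B a+1_{B^c}\sigma$ and $r':=1_B b+1_{B^c}\sigma$ satisfy $r\le r'$ and $(y,r)\in X_S=X_u$, hence $u(y)\le r\le r'$, so $(y,r')\in X_u=X_S$, contradicting $(y(\omega),b)\notin S(\omega)$ on $B$. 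Since its epigraphical mapping is the closed-valued Effros measurable map $S$, $f_u$ is a normal integrand.

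It remains to establish the reciprocality relations. For $f_{u_f}=f$: the conditional epigraph of $u_f$ is $X_{u_f}=\{(x,r)\colon f(\cdot,x(\cdot))\le r\ \text{a.s.}\}=X_{S_f}$, so by the reciprocality part of Theorem~\ref{t:selec} one has $S_{X_{u_f}}=S_{X_{S_f}}=S_f$; thus $f_{u_f}$ has the same epigraphical mapping as $f$ and therefore equals $f$. For $u_{f_u}=u$: fixing $x\in L^0(E)$, the identity $X_S=X_u$ reads $\{r\in L^0\colon u(x)\le r\}=\{r\in L^0\colon(x(\omega),r(\omega))\in S(\omega)\ \text{a.s.}\}$; applying the essential infimum to both sides and invoking Castaing's theorem (Theorem~\ref{castaing}) to identify the essential infimum of the measurable selections of the Effros measurable fiber $\omega\mapsto\{s\colon(x(\omega),s)\in S(\omega)\}$ with its pointwise infimum $\omega\mapsto f_u(\omega,x(\omega))$, one obtains $u(x)=u_{f_u}(x)$.

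The main obstacle I anticipate is precisely the ``genuine epigraph'' step: showing that the abstract closed-valued map $S_{X_u}$ produced by Theorem~\ref{t:selec} is fiberwise upward closed in the real coordinate, which I would extract from upward closedness of $X_u$ together with the measurable-selection/projection argument above; and, running alongside it, the careful handling of the effective domain, i.e.\ of the values $\pm\infty$ of $u$ and $f_u$ and the possibly empty fibers of $S$.
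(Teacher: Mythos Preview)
Your route is exactly the paper's: define $u_f$ by pointwise composition, and for $f_u$ pass to the conditional epigraph $X_u\subset L^0(E\times\mathbb{R})$, invoke Theorem~\ref{t:selec} to obtain $S=S_{X_u}$, set $f_u(\omega,y)=\inf S(\omega)_y$, and read off reciprocality from the reciprocality in Theorem~\ref{t:selec}. You are in fact more careful than the paper, which simply asserts that this $f_u$ is a normal integrand; you correctly isolate the step that is glossed over there, namely that $S(\omega)$ must be a.s.\ upward closed in the real coordinate so that it really is the epigraphical mapping of $f_u$. One small technical slip in your argument for that step: you cannot in general take \emph{both} $a$ and $b$ to be rational constants, since the vertical fibre $\{s:(y,s)\in S(\omega)\}$ may miss $\mathbb{Q}$ entirely (e.g.\ $S(\omega)=\{(y_0,\omega)\}$ on irrational $\omega$). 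Fix only $b\in\mathbb{Q}$, and on the positive-measure set $B_b=\{\omega:\exists\,y,\exists\,r<b,\ (y,r)\in S(\omega),\ (y,b)\notin S(\omega)\}$ obtain a measurable pair $(y,a)$ via Aumann's selection theorem (using graph-measurability of $S$ under completeness of $\mathcal{F}$); the remainder of your contradiction goes through verbatim with this measurable $a$.
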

\begin{proof}
	Due to normality, $u_f\colon L^0(E)\to \bar{L}^0$ given by $x\mapsto (\omega\mapsto f(\omega,x(\omega)))$ is well defined. 
	Direct inspection shows that $u_f$ is stable and sequentially lower semi-continuous. 
	Conversely, put $X:=\{(x,r)\in L^0(E\times\mathbb{R})\colon u(x)\geq r\}$. 
	By assumption, $X$ is a stable and sequentially closed subset of $L^0(E\times \mathbb{R})$.  
	By Proposition \ref{t:selec}, there exist an Effros measurable and closed-valued map $S_X\colon \Omega\rightrightarrows E\times \mathbb{R}$ corresponding to $X$. 
	Thus $f_u\colon \Omega\times E\to \mathbb{R}\cup \{\pm\infty\}$ defined by $f(\omega,x):=\inf S(\omega)_x$ a.s.~is a normal integrand where $S(\omega)_x$ denotes the $x$-section of $S(\omega)$. 
	It follows from the reciprocality relations in Proposition \ref{t:selec} that $u_{f_u}=u$ and $f_{u_f}=f$. 
\end{proof}
We compare the assumptions which underly conditional analysis and measurable selections. 
Conditional analysis is applicable under the following two purely measure-theoretic hypotheses: 
\begin{itemize}
	\item A probability measure $\mathbb{P}$ on $(\Omega,\mathcal{F})$ needs to be fixed a priori in order to identify\footnote{Actually, one  needs to fix a $\sigma$-ideal $\mathcal{I}$ of $\mathcal{F}$ such that the quotient $\mathcal{F}/\mathcal{I}$ is a complete Boolean algebra. The ideal of null sets of a $\sigma$-finite measure is one such example, see \cite{drapeau2016algebra} and the references therein for more examples.} sets, functions, relations, etc.    
	\item One consequently works in the context of conditional sets \cite{drapeau2016algebra}. 
	In particular, all involved sets must satisfy stability w.r.t.~countable concatenations (cf.~Definition \ref{def:basic}).
\end{itemize} 
Conditional analysis does not rely on the following topological assumptions which are prevalent in measurable selections and random set theory: 
\begin{itemize}
	\item standard Borel space\footnote{For example, for purposes of a dynamical programming principle in finite discrete time stochastic optimal control, more precisely the existence of disintegration of measure, the underlying measure space is additionally assumed to be standard Borel in \cite{bertsekas1978stochastic}.}, measure completeness, closed-valued mappings and Polish spaces.     
\end{itemize}
The established connections in Theorem \ref{t:selec} and Corollary \ref{c:normalintegrand} suggest that a stochastic control problem can equally be formalized in the language of conditional set theory.  
In e.g.~\cite{pennanen2011convex,pennanen2012stochastic,pennanen2017shadow,rockafellar1974continuous,rockafellar1983deterministic} some form of integrability is always assumed which leads to further technicalities in the proofs, see also  \cite{hiai1977integrals,lepinette2017conditional,rockafellar1976integral} and the references therein for basic studies on the relations of (conditional) expectations and integrands.        
The main results in Section \ref{sec1} are established for general utilities which are not necessarily in the form of expected utilities, and no integrability assumptions are required.  
Moreover, the following features of our control sets distinguish us from the existing literature.  
\begin{itemize}
	\item We introduce a new notion of conditional compactness which works in finite and infinite dimensional settings thanks to a conditional version of the Heine-Borel theorem \cite[Theorem 4.6]{drapeau2016algebra}. Conditional compactness extends the notion of compact-valued and Effros measurable mappings, see \cite{jamneshan2017compact} where it is proved that conditional compactness uniquely corresponds to compact-valued and Effros measurable mappings in the finite dimensional case.  
	\item The control sets work in any conditional metric space. This involves many examples which are out of reach of the existing technology, for example conditional $L^p$-spaces on general probability spaces, $L^0(\mathbb{R})^n$ with a measurable dimension, and $L^0(X)$ where $X$ is a non-separable metric space. Another example are conditional weak topologies which are not included in this article for which conditional analysis offers extensive tools as well, see e.g.~\cite{drapeau2016algebra,jamneshan2017compact,zapata2016eberlein}.     
\end{itemize}

\end{document}